\begin{document}

\def\Aut{\mathop{\rm Aut}\nolimits}
\def\Sym{\mathop{\rm Sym}\nolimits}
\def\Max{\mathop{\rm Max}\nolimits}
\def\an{\mathop{\rm an}\nolimits}
\def\SL{\mathop{\rm SL}\nolimits}

\def\Cl{\mathop{\rm Cl}\nolimits}
\def\val{\mathop{\rm val}\nolimits}
\def\Capa{\mathop{\rm Cap}\nolimits}
\def\Min{\mathop{\rm Min}\nolimits}
\def\prof{\mathop{\rm prof}\nolimits}
\def\Th{\mathop{\rm Th}\nolimits}
\def\lim{\mathop{\rm lim}\nolimits}
\def\Sup{\mathop{\rm Sup}\nolimits}
\def\rk{\mathop{\rm rk}\nolimits}

\newtheorem{defi}{Definition}[section]
\newtheorem{theorem}[defi]{Theorem}
\newtheorem{definition}[defi]{Definition}
\newtheorem{lemma}[defi]{Lemma}
\newtheorem{proposition}[defi]{Proposition}
\newtheorem{conjecture}[defi]{Conjecture}
\newtheorem{corollary}[defi]{Corollary}
\newtheorem{problem}[defi]{Problem}
\newtheorem{remark}[defi]{Remark}
\newtheorem{example}[defi]{Example}
\newtheorem{question}[defi]{Question}
\newtheorem{convention}[defi]{Conventions}
\newtheorem{fact}[defi]{Fact}

\title{Profinite groups with NIP theory and $p$-adic analytic groups}

\author{Dugald Macpherson\footnote{Research partially supported
by EPSRC grant EP/K020692/1},\\
School of Mathematics,\\ University of Leeds,\\Leeds LS2
9JT,\\England\\h.d.macpherson@leeds.ac.uk
 \and Katrin Tent\footnote{Research partially supported by SFB 878},\\Mathematisches Institut,\\ 
Universit\"at M\"unster,\\ 
Einsteinstrasse 62,\\
48149 M\"unster, 
Germany 
\\tent@math.uni-muenster.de}

\maketitle


\begin{abstract}
We consider profinite groups as 2-sorted first order structures, with a group sort, and a second sort which acts as an index set for a uniformly definable basis of neighbourhoods of the identity. It is shown that if the basis consists of {\em all} open subgroups, then the first order theory of such a structure is NIP (that is, does not have the independence property) precisely if the group has a normal subgroup of finite index which is a direct product of finitely many compact $p$-adic analytic groups, for distinct primes $p$. In fact, the condition NIP can here be weakened to NTP${}_2$. We also show that any NIP profinite group, presented as a 2-sorted structure, has an open prosoluble normal subgroup.

\end{abstract}

\section{Introduction}

We view a profinite group $G$ as an inverse limit of a given system $(H_i)_{i\in I}$ of finite groups, so equipped with a specified family $(K_i:i\in I)$ of open subgroups of finite index. We present $G$ as a structure $\mathcal{G}$ in a 2-sorted language $L_{\prof}$ with sorts $G,I$, with the group language on $G$, a partial order $\leq$ on $I$, and a relation $K\subset G\times I$ so that for each $i\in I$,
$K_i:=\{x\in G: \mathcal{G}\models K(x,i)\}$ is an open  subgroup of $G$, and $\{K_i:i\in I\}$ is a basis of neighbourhoods of 1 in $G$; for $i,j\in I$ we have $i\leq j$ if and only if $K_i\geq K_j$.  
We shall say that the 2-sorted profinite group
$\mathcal{G}=(G,I)$ is {\em full} if $\{K_i:i\in I\}$ consists of {\em all} the open subgroups of $G$. 

Given a complete first order theory $T$, a formula $\phi(\bar{x},\bar{y})$ has the {\em independence property} if
there is $M\models T$ and $\{\bar{b}_i:i\in {\mathbb N}\}\subset M^{|\bar{y}|}$ such that for any $S\subset {\mathbb N}$ there is $\bar{a}_S\in M^{|\bar{x}|}$ with $M\models \phi(\bar{a}_S,\bar{b}_i)$ if and only if $i\in S$. The theory $T$ is said to have the independence property if some formula has the independence property with respect to $T$, and to be {\em NIP}, or {\em dependent}, otherwise. This notion  is model-theoretically robust -- if a structure $N$ is interpretable in $M$ and $M$ has NIP theory, then so does $N$.

There is a notion of rank on partial types in finitely many variables, namely dp-rank, which takes ordinal values in NIP theories. For the definition, see \cite[Chapter 5]{simon}.   An NIP theory is {\em strongly NIP} if all types have finite dp-rank.

 The class of NIP theories properly contains that of stable theories, and has been the subject of considerable recent attention -- see e.g. \cite{simon}. In particular, a complete theory is NIP if and only if, in every model of $T$, any uniformly definable family of sets has finite {\em Vapnik-Chervonenkis dimension}, a notion important in statistical learning theory. There has been substantial work on groups definable in a NIP theory. Examples include stable groups, such as  abelian groups, algebraic groups over an algebraically closed field, and torsion-free hyperbolic groups;  and in the non-stable setting, the real and $p$-adic semi-algebraic Lie groups. We show here that also the compact $p$-adic analytic groups fit into the setting of NIP groups even when considered as 2-sorted full profinite groups.
 
 As a basic example, fix a prime $p$, let $G=({\mathbb Z}_p,+)$, let $I=\omega$, and define the relation $K$ so that for each $i\in \omega$, $K_i:=p^i{\mathbb Z}_p$. Then $G$ is $p$-adic analytic, $\mathcal{G}=(G,I)$ is a full pro-$p$ group, and $\mathcal{G}$ is interpretable in the $p$-adic field ${\mathbb Q}_p$ (with $I$ the non-negative part of the value group), so is NIP.

 Recall that, given a complete theory $T$, a formula $\phi(\bar{x},\bar{y})$ has TP${}_2$ (the tree property of the second kind) for $T$ if there is some $k\in {\mathbb N}$, some $M\models T$, and elements $\bar{a}_{ij}\in M^{|\bar{y}|}$ (for $i,j\in {\mathbb N}$) such that

(i) for each $i\in {\mathbb N}$, the formulas $\phi(\bar{x},\bar{a}_{ij})$ (for $j\in {\mathbb N}$) are $k$-inconsistent, that is, any conjunction of $k$ distinct such formulas is inconsistent with $T$, and

(ii) for any function $f:{\mathbb N}\to {\mathbb N}$, the set of formulas
$\{\phi(\bar{x},\bar{a}_{i,f(i)}):i\in {\mathbb N}\}$ is consistent with $T$.

\noindent
The theory $T$ is NTP${}_2$ if no formula has TP${}_2$ with respect to $T$. These theories encompass the NIP theories, see e.g. \cite{simon}, Prop. 5.3.1, and simple theories are also NTP${}_2$. Theories which are NTP${}_2$ have been extensively studied recently - see for example \cite{cks, hempel}.

Our main theorem is the following. 

\begin{theorem}\label{mainconj}
Let $\mathcal{G}=(G,I)$ be a  full profinite group. 
Then the following are equivalent:
\begin{enumerate}
\item $G$ has an open normal subgroup $N=P_1\times\ldots \times P_t$, where the $P_i$ are compact $p$-adic analytic groups with respect to distinct primes $p_1,\ldots,p_t$. 
\item  $\Th(\mathcal{G})$ is strongly NIP.
\item  $\Th(\mathcal{G})$ is NIP.
\item  $\Th(\mathcal{G})$ is NTP$_2$.
\end{enumerate}
\end{theorem}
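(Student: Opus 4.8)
The plan is to establish the chain of implications $(1)\Rightarrow(2)\Rightarrow(3)\Rightarrow(4)\Rightarrow(1)$. Of these, $(2)\Rightarrow(3)$ is immediate, since strong NIP is by definition NIP together with finiteness of all dp-ranks, and $(3)\Rightarrow(4)$ is the inclusion of the NIP theories in the NTP${}_2$ theories recalled above. So all the real content is in $(1)\Rightarrow(2)$ and in $(4)\Rightarrow(1)$.

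For $(1)\Rightarrow(2)$ I would interpret $\mathcal{G}$ in a structure already known to be strongly NIP, after two reduction lemmas. \emph{Finite extensions}: if $N$ is open normal in $G$, then $G$ is a union of $[G:N]$ cosets of $N$ with multiplication determined by that of $N$ and a $2$-cocycle with values in $N$, and — using that $G$ and $N$ are topologically finitely generated, which holds because finite rank implies finite generation — only finitely many open subgroups of $G$ lie over a given open subgroup of $N$, each specified by finite data; hence the full profinite structure on $G$ is interpretable in that on $N$ together with finitely much extra data, and (strong) NIP transfers. \emph{Direct products over distinct primes}: if $N=P_1\times\cdots\times P_t$ with $P_i$ pro-$p_i$ for distinct $p_i$, then, the $P_i$ having pairwise coprime supernatural orders, every open subgroup of $N$ is $H_1\times\cdots\times H_t$ with $H_i\le P_i$ open, so the full profinite structure on $N$ is interpretable in the disjoint union of those on the $P_i$, and strong NIP of a finite disjoint union of strongly NIP structures is standard. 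This reduces us to a single full profinite compact $p$-adic analytic group $\mathcal{P}=(P,I)$. Here I invoke Lazard: $P$ has an open normal uniform pro-$p$ subgroup $U$ of some dimension $d$, the quotients $U/U^{p^{k}}$ form a base of open normal subgroups and are interpreted in $\mathbb{Q}_p$ uniformly in $k$ (using $\mathbb{Z}_p/p^{k}\mathbb{Z}_p$ and the value group as parameter space, with the group operation coming from the Lazard/Baker--Campbell--Hausdorff correspondence), while $\mathbb{Q}_p$ also interprets $U$ via a faithful matrix representation over $\mathbb{Z}_p$; since every open subgroup of $P$ lies between some $U^{p^{k}}$ and $P$ and so corresponds to a subgroup of the finite group $P/U^{p^{k}}$, one obtains an interpretation of the whole two-sorted structure $\mathcal{P}$ — index sort and membership relation included — in $\mathbb{Q}_p$, or at worst in its expansion $\mathbb{Q}_p^{\mathrm{an}}$ by restricted analytic functions if $U$ is not semialgebraic in any coordinates. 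Both of these are strongly NIP (indeed $\mathbb{Q}_p$ is dp-minimal), so $(1)\Rightarrow(2)$ follows.

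For $(4)\Rightarrow(1)$ I would argue by contraposition: assuming $G$ fails $(1)$, I produce a formula with TP${}_2$ for $\Th(\mathcal{G})$. By the structure theory of profinite groups of finite rank (Lazard; Lubotzky--Mann), condition $(1)$ is equivalent to $G$ being, virtually, a pro-$\pi$ group of finite rank for some finite set $\pi$ of primes. So if $(1)$ fails then, after passing to any open subgroup, $G$ either has nonabelian finite simple sections of unbounded complexity, or has infinite rank, or is not pro-$\pi$ for any finite $\pi$ (equivalently infinitely many primes occur among orders of its finite quotients). In the first case one uses the classification of finite simple groups — as in the companion result that an NIP, here NTP${}_2$, full profinite group has an open prosoluble normal subgroup — to encode TP${}_2$ from the action of $G$ on cosets across an infinite family of simple sections of distinct characteristic or unbounded Lie rank, essentially interpreting the theory of infinitely many independent parametrised equivalence relations. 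Having reduced to $G$ virtually prosoluble, the remaining two cases are handled uniformly: infinite rank, or infinitely many primes, each forces the existence in $G$ of a definable family of open subgroups $(N_i)$ that are \emph{independent}, in the sense that $[G:\bigcap_{i\in F}N_i]=\prod_{i\in F}[G:N_i]$ for every finite $F$ (coming in the first case from chains of elementary abelian sections of unbounded rank, as for a free pro-$p$ group passed to open subgroups of large rank, and in the second case from the Chinese Remainder Theorem applied to the distinct primes), with the indices $[G:N_i]$ becoming infinite in an elementary extension $\mathcal{G}^{*}$. In $\mathcal{G}^{*}$ one then takes, for suitable nonstandard indices, $\bar a_{ij}$ a representative of the $j$-th coset of $N_i^{*}$, and the single formula $K(x\bar a^{-1},i)$ — "$x$ lies in the same left coset of $K_i$ as $\bar a$" — has TP${}_2$: each row is $2$-inconsistent because distinct cosets are disjoint, and every transversal is consistent by the independence of the $N_i^{*}$. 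Thus NTP${}_2$ forces $G$ to be virtually pro-$\pi$ of finite rank for finite $\pi$, which is $(1)$.

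The \textbf{main obstacle} is the direction $(4)\Rightarrow(1)$, for two reasons. First, one must extract genuine TP${}_2$ rather than merely IP: every combinatorial configuration above has to be assembled into a bona fide two-dimensional array with row-wise $k$-inconsistency \emph{and} consistency of every transversal, and it is exactly here that one needs a definable family of pairwise-independent open subgroups of unbounded index — constructing this family inside $G$ from the mere failure of $(1)$ is the crux of the group theory, and it is precisely here that the fullness of $\mathcal{G}$ (the availability in the index sort of \emph{all} open subgroups, hence of all the relevant cosets simultaneously) is used essentially. Second, the reduction to the virtually prosoluble case requires uniform control, via the classification of finite simple groups, of how finite simple groups occur as sections of the finite quotients $G/K_i$, together with a uniform recipe turning an infinite such family into a TP${}_2$-configuration. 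By comparison $(1)\Rightarrow(2)$ is technically lighter, but still requires care in the two reduction lemmas and in checking that the entire two-sorted structure, not only the group, is interpreted.
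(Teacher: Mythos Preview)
Your overall architecture is right, and your core insight for $(4)\Rightarrow(1)$---that an unbounded family of \emph{independent} uniformly definable open subgroups yields a TP${}_2$ array via cosets, and that fullness is precisely what makes such families uniformly definable---is exactly the engine of the paper's proof (its Lemma~\ref{l:NTP2}). But both nontrivial directions have genuine gaps relative to the paper.

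\medskip
\noindent\textbf{$(1)\Rightarrow(2)$.} Your two reductions (finite extension via a $2$-cocycle; direct product over distinct primes) match the paper's. The missing piece is the core step: interpreting a single uniformly powerful pro-$p$ group $U$ \emph{together with its full family of open subgroups} in $\mathbb{Z}_p^{\an}$. Your sketch (``every open subgroup lies between $U^{p^k}$ and $U$, hence corresponds to a subgroup of a finite quotient'') does not establish \emph{uniformity} as $k$ varies. The paper uses du~Sautoy's \emph{good basis} machinery: every open $H\le U$ admits a $d$-tuple $(h_1,\dots,h_d)$ characterised by a first-order condition in $\mathbb{Z}_p^{\an}$ (involving the definable valuation $\omega$ and the definable $\mathbb{Z}_p$-exponentiation), from which $H=\{h_1^{\lambda_1}\cdots h_d^{\lambda_d}:\lambda_i\in\mathbb{Z}_p\}$ is recovered uniformly. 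That, not a matrix embedding, is what makes the index sort interpretable.

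\medskip
\noindent\textbf{$(4)\Rightarrow(1)$.} Two issues.

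First, your reduction to the virtually prosoluble case via ``nonabelian simple sections of unbounded complexity'' is an unnecessary detour and is not how the paper proceeds; moreover the result you invoke (open prosoluble subgroup) is proved in the paper only under NIP (Proposition~\ref{prosol}), not NTP${}_2$. The paper instead proves \emph{finite rank} directly: if rank is infinite, some finite section has large $d(\cdot)$, so by Lucchini--Guralnick some Sylow $p$-subgroup has large $d(\cdot)$, whence its Frattini quotient is elementary abelian of large rank; the preimages of its index-$p$ subgroups are open in $G$, uniformly definable by fullness, and independent, so Lemma~\ref{l:NTP2} gives a contradiction. Once rank is finite, prosolubility falls out \emph{for free} from the structure theorem (Reid/Wilson): $G$ is pronilpotent-by-abelian-by-finite.

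Second, your asserted equivalence ``$(1)\Leftrightarrow G$ is virtually pro-$\pi$ of finite rank for some finite $\pi$'' is true, but it is not a citation---proving the nontrivial direction requires exactly the Schur--Zassenhaus and $\Aut$-argument that you omit and the paper carries out. Concretely: after passing to finite index one has $G$ with $N=P_1\times\cdots\times P_t$ pronilpotent and $G/N$ abelian involving the same primes; Lemma~\ref{l:NTP2} is applied twice more (once to $G/N$, once inside $N$ via a Schur--Zassenhaus complement) to bound the set of primes; then for each $i$ one splits $\overline{Q_i}$ over $\prod_{j\ne i}P_j$ by Schur--Zassenhaus and uses that $\Aut$ of a finitely generated virtually pro-$p$ group is virtually pro-$p$ to see the complement acts through a finite group, whence after a further finite-index pass $G=N$. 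Without this last step, ``finite rank and finitely many primes'' does not by itself give a direct product of $p$-adic analytic groups---the example $G_2$ of Example~\ref{4ex} shows finite rank alone is insufficient, and the passage from finite-rank pro-$\pi$ to virtually-$\prod P_i$ is real work.

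In short: your TP${}_2$-from-independent-cosets idea is the right one and is essentially the paper's Lemma~\ref{l:NTP2}; but the paper applies it cleanly three times (rank, primes in $G/N$, primes in $N$) after a structure theorem, rather than front-loading a prosoluble reduction, and it finishes with a Schur--Zassenhaus/$\Aut$ argument that your sketch is missing.
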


The implication ($ 2.\Rightarrow 3.$) is trivial and ($ 3.\Rightarrow 4.$)  well-known, as pointed out above. It is thus left to show ($1.\Rightarrow 2.$) and ($4.\Rightarrow 1.$).
Part ($1. \Rightarrow  2.$) of Theorem~\ref{mainconj} consists essentially of the following proposition, which follows   from known results, especially those of du Sautoy in \cite{dus}. The structure ${\mathbb Z}_p^{{\rm an}}$ is an expansion of the $p$-adic field by restricted analytic functions, and is described at the start of Section 3. It was introduced by Denef and van den Dries in  \cite{dd}, who proved that it has quantifier elimination once an additional binary divisibility function is added to the language. For the definition of a {\em uniformly powerful} pro-$p$ group, see Section 2.

\begin{proposition} \label{dusautoy}
Let $\mathcal{G}$ be a full uniformly powerful pro-$p$  group. Then 
$\mathcal{G}$ is interpretable in the structure ${\mathbb Z}_p^{{\rm an}}$, 
whence $\Th(\mathcal{G})$ is strongly NIP.
\end{proposition}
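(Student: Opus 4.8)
The plan is to make the interpretation of a full uniformly powerful pro-$p$ group $\mathcal{G}$ inside ${\mathbb Z}_p^{\mathrm{an}}$ completely explicit, using the coordinate structure that uniformly powerful groups inherit from $p$-adic analytic group theory. Recall that if $G$ is uniformly powerful of dimension $d$, then the map $x \mapsto x^{p^n}$ is a bijection $G \to G_{n+1}$ (where $G_n$ is the lower $p$-series, which here equals the iterated $p$-th power series), and, fixing a topological generating set $a_1,\dots,a_d$, every element of $G$ has a unique expression $a_1^{\lambda_1}\cdots a_d^{\lambda_d}$ with $\lambda_i \in {\mathbb Z}_p$; this gives a homeomorphism $G \cong {\mathbb Z}_p^d$. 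The key classical input — this is exactly what du Sautoy's work in \cite{dus} supplies — is that under this identification the group multiplication $G \times G \to G$ corresponds to a map ${\mathbb Z}_p^d \times {\mathbb Z}_p^d \to {\mathbb Z}_p^d$ each of whose coordinates is given by a (restricted) $p$-adic analytic function, i.e. is definable in ${\mathbb Z}_p^{\mathrm{an}}$. So the group sort $G$ is interpreted as the definable set ${\mathbb Z}_p^d$ with this analytic multiplication, and inversion is handled the same way (or derived).

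First I would write down this coordinatisation and cite du Sautoy for the analyticity of the multiplication; this takes care of the group sort. Second, and this is the part that actually needs the word ``full'', I must interpret the index sort $I$, which ranges over \emph{all} open subgroups of $G$, together with the relations $\leq$ and $K(x,i)$. Here I would use the structure theory of $p$-adic analytic groups: every open subgroup $H$ of $G$ is itself $p$-adic analytic of dimension $d$, contains some $G_n$, and — shrinking if necessary — we can describe $H$ via the finite group $G/G_n$. Concretely, an open subgroup containing $G_n$ corresponds to a subgroup of the finite group $G/(G_n) \cong ({\mathbb Z}/p^n{\mathbb Z})^d$ (as a set; the group operation on this quotient is again given by the reduction mod $p^n$ of the analytic multiplication, hence definable), and such a finite object is coded by a tuple from ${\mathbb Z}_p$ (e.g. by a generating tuple, or by its characteristic function on the finite set $\{0,\dots,p^n-1\}^d$, itself an interpretable set since $\{0,1,\dots,p^n-1\}$ is uniformly definable in ${\mathbb Z}_p$ using the valuation/divisibility). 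Thus $I$ is interpreted on an imaginary sort: pairs (a level $n$, a subgroup of $G/G_n$), modulo the equivalence identifying $(n,\bar H)$ with $(n',\bar H')$ when they pull back to the same open subgroup of $G$; the partial order $\leq$ and the membership relation $K(x,i)$ — "$x$ lies in the open subgroup coded by $i$" — then become definable conditions on these codes, essentially a bounded quantifier over $\{0,\dots,p^n-1\}^d$.

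Having set up both sorts and all the relations as definable (with parameters) in ${\mathbb Z}_p^{\mathrm{an}}$, the interpretation is complete. The final sentence of the proposition follows formally: ${\mathbb Z}_p^{\mathrm{an}}$ has quantifier elimination (Denef–van den Dries \cite{dd}) and is known to be NIP — indeed it is $P$-minimal / a model of a strongly NIP theory in the sense that all types have finite dp-rank, since it is an expansion of ${\mathbb Q}_p$ that remains NIP and has finite-dimensional definable sets — and NIP (and finiteness of dp-rank) pass to interpreted structures, so $\Th(\mathcal{G})$ is strongly NIP. I would fold the standard reference for NIP of the analytic $p$-adic structure into the text at the point where ${\mathbb Z}_p^{\mathrm{an}}$ is introduced in Section 3.

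I expect the main obstacle to be the precise bookkeeping for the index sort: one must check that "the collection of all open subgroups" really is captured, with no redundancy and with $\leq$, $K$ correctly interpreted, by the quotient-and-code scheme — in particular that passing to the quotient $G/G_n$ is uniformly definable (which needs the analytic multiplication to reduce sensibly mod $p^n$, i.e. needs the $G_n$ to be cut out by valuation inequalities on the coordinates, which holds for uniformly powerful $G$ since $G_n$ corresponds to $(p^{n-1}{\mathbb Z}_p)^d$) and that two codes at different levels are identified exactly when they should be. The group-sort part is essentially a citation of \cite{dus}; the NIP conclusion is essentially a citation; so the genuine work of the proof is this explicit description of $I$.
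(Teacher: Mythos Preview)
Your treatment of the group sort is correct and matches the paper's: both coordinatise $G$ on ${\mathbb Z}_p^d$ via a topological generating set and cite du Sautoy for analyticity of the multiplication map. The strongly NIP conclusion is also handled as in the paper (via $P$-minimality and dp-minimality of ${\mathbb Z}_p^{\an}$).

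The gap is in your coding of the index sort. Neither of your two suggested codings works uniformly. Coding an open subgroup by its characteristic function on $\{0,\dots,p^n-1\}^d$ requires a tuple whose length grows with $n$, so cannot live in a single imaginary sort. Coding by a generating tuple $(h_1,\dots,h_d)$ together with a level $n$ does give bounded-length data, but then the membership relation ``$x$ lies in the subgroup of $G/G_n$ generated by $\bar h_1,\dots,\bar h_d$'' is not expressible by a single first-order formula uniformly in $n$: word length in the finite $p$-group $G/G_n$ is unbounded as $n$ grows, and there is no reason the set $\{h_1^{\lambda_1}\cdots h_d^{\lambda_d}:\lambda_i\in{\mathbb Z}_p\}$ should equal the closed subgroup generated by an \emph{arbitrary} tuple $(h_1,\dots,h_d)$. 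Your remark about a ``bounded quantifier over $\{0,\dots,p^n-1\}^d$'' does not help: quantifying over \emph{elements} of that finite set is fine, but you need to quantify over \emph{words} of unbounded length, or equivalently over subgroups.

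The paper resolves exactly this point via du Sautoy's notion of a \emph{good basis}. One first observes that the valuation-like map $\omega$ on $G$ and the ${\mathbb Z}_p$-exponentiation action $(g,\lambda)\mapsto g^\lambda$ are analytic, hence definable in ${\mathbb Z}_p^{\an}$. A good basis of an open subgroup $H$ is a $d$-tuple $(h_1,\dots,h_d)$ satisfying explicit first-order conditions in terms of $\omega$ and exponentiation, with the crucial property that $H=\{h_1^{\lambda_1}\cdots h_d^{\lambda_d}:\lambda_i\in{\mathbb Z}_p\}$; every open subgroup has one. Thus the set of good bases is definable, membership in the corresponding subgroup is definable (via exponentiation, not via words), and $I$ is interpreted as the quotient of the set of good bases by the definable equivalence ``same subgroup''. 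This is the missing ingredient; once you have it, the rest of your outline goes through.
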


We recall briefly some background. Following the usual convention, we say that a set $X\subset G$ (topologically) {\em generates} the profinite group $G$ if the abstract subgroup $\langle X\rangle$ (group-theoretically) generated by $X$ is dense in $G$, and say that $G$ is a {\em finitely generated} profinite group if $X$ can be chosen to be finite. If $G$ is a finitely generated profinite group then $d(G)$ denotes the cardinality of the smallest generating set for $G$, in the above topological sense. If $G$ is a profinite group and $X\subseteq G$, we write $\bar{X}$ for the topological closure of $X$ in $G$. The profinite group $G$ is said to have {\em rank $r$} if every closed subgroup of $G$ has a topological generating set of size $r$, and $r$ is minimal with this property. To avoid confusion with this rank, the model-theoretic rank mentioned earlier is always called {\em dp-rank}.

Note that any compact $p$-adic analytic group has the structure of a profinite group. In fact (see  \cite[Corollary 9.36]{ddms}) a topological group  is a compact $p$-adic analytic group if and only if it has an open subgroup of finite index which is a pro-$p$ group of finite rank.

It is immediate that every open subgroup of a profinite group has finite index. The following is a converse for finitely generated profinite groups, and was proved in the pro-$p$ case by Serre \cite[Section 4.2, Exercise 6]{serre} and in general by Nikolov and Segal \cite{NS1,NS2}. It is used occasionally in Section 4, but the uses are probably not essential. 

\begin{theorem} \label{ns}
Let $G$ be a finitely generated profinite group, and $H\leq G$. Then $H$ is open in $G$ if and only if $|G:H|$ is finite. 
\end{theorem}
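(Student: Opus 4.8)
The forward direction — open implies finite index — holds for every profinite group and needs no hypothesis: if $H\le G$ is open, its cosets form an open partition of the compact space $G$, so there are only finitely many of them. The content is the converse, which is in effect the theorem of Serre in the pro-$p$ case and of Nikolov and Segal in general, and the plan is to follow their approach. First I would reduce to normal subgroups: if $[G:H]=n<\infty$, the normal core $\bigcap_{g\in G} gHg^{-1}$ is normal of index at most $n!$, and any subgroup containing an open subgroup is open, so it suffices to show every finite-index \emph{normal} subgroup is open — equivalently, since a closed subgroup of finite index is automatically open, that every such subgroup is closed.

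Next I would cut down the index along a chief series, by induction on $[G:H]$. Given $H\trianglelefteq G$ with $[G:H]>1$, choose $M$ with $H\triangleleft M\trianglelefteq G$ and $M/H$ a minimal normal subgroup of $G/H$; then $M\ne H$, so $[G:M]<[G:H]$, and by induction $M$ is open. An open subgroup of a topologically $d$-generated profinite group is itself topologically finitely generated (one gets at most $1+[G:M](d-1)$ topological generators by applying Schreier's formula in the finite quotients and extracting a consistent choice by compactness), so $M$ is again a topologically finitely generated profinite group, and $H$ is open in $G$ iff $H$ is open in $M$. Since $M/H$ is a minimal normal subgroup of $G/H$, it is characteristically simple, so $M/H\cong S^{(k)}$ for a finite simple group $S$. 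Thus it suffices to treat the case: $G$ topologically finitely generated profinite, $H\trianglelefteq G$, and $G/H$ a direct power of a finite simple group.

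When $S\cong\mathbb{Z}/p$, the quotient $G/H$ has exponent $p$, so $H$ contains the abstract subgroup $V$ generated by all $p$-th powers and all commutators; and $\overline V$ has finite index, being the kernel of the map from the finitely generated group $G$ onto its (finite) maximal elementary abelian $p$-quotient. Hence it is enough to know $V=\overline V$, and this I would get from a uniform bound: in any $d$-generated finite group $Q$, every element of $Q^p[Q,Q]$ is a product of at most $f(d)$ terms each of the form $x^p$ or $[x,y]$. Applying this in every finite quotient $G/U$ and passing to the inverse limit — using that the set of bounded-length products of such terms is closed in the compact group $G$ — yields $\overline V=V$. When $S$ is non-abelian simple one argues in the same spirit, but with $H$ now containing a product of boundedly many subgroups built from values of a suitable word (governed by the action of $G$ on $S^{(k)}$), again closed of finite index by the appropriate relativized uniform bound; this is the genuinely hard case.

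The main obstacle is exactly those uniform bounds on the width of the derived subgroup and its relativized versions in finite $d$-generated groups: these are the principal theorems of Nikolov and Segal, and their proofs run through the classification of finite simple groups, the delicate part being groups of Lie type of unbounded rank. In the pro-$p$ case every chief factor is cyclic, the non-abelian case never occurs, and the residual abelian case admits a self-contained induction on $d(G)$ — which is why Serre's argument long predates, and is vastly simpler than, the general one. In the present paper Theorem~\ref{ns} is invoked only as a black box, to guarantee that the finite-index subgroups appearing in Section~4 are open, and as remarked above that use is probably not essential.
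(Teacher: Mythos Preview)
The paper does not prove Theorem~\ref{ns} at all: it is stated as background, with the pro-$p$ case attributed to Serre and the general case to Nikolov and Segal \cite{NS1,NS2}, and is then used only as a black box in Section~4. So there is no ``paper's own proof'' to compare against.

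Your sketch is a faithful outline of the Nikolov--Segal strategy --- the reduction to normal subgroups via the core, the chief-series induction down to characteristically simple quotients, and the identification of the uniform verbal-width bounds as the real content --- and you correctly flag that the non-abelian case is where the classification of finite simple groups enters and is the substance of \cite{NS1,NS2}. One small point: your inductive step passes from $G$ to the open subgroup $M$, but then you still need the full statement for the pair $(M,H)$ with $M/H$ characteristically simple; this is fine, but note that it is a reduction to a special case rather than a straight induction on the index. Also, the precise uniform bound you invoke in the abelian case (that every element of $Q^p[Q,Q]$ is a bounded product of $p$-th powers and commutators in every $d$-generated finite $Q$) is itself a nontrivial consequence of the Nikolov--Segal machinery, not something you can get cheaply; in the pure pro-$p$ setting Serre's argument is organized differently and avoids this. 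But since you explicitly acknowledge that these width theorems are the obstacle and that the paper treats the result as a citation, your proposal is in good shape as a roadmap --- it simply is not, and does not claim to be, a self-contained proof.
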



\begin{example} \label{4ex} \rm We mention two similar-looking examples of profinite groups, which, viewed as 2-sorted full profinite groups, both fail to be NIP, but for slightly different reasons.  The first is the pro-$p$ group $G_1:=C_p {\rm wr} {\mathbb Z}_p$ which has a closed subgroup (the base group, a Cartesian power of $C_p$) which is not topologically finitely generated. Thus, as a full profinite group $G_1$ is not NIP, or even NTP${}_2$, by Lemma~\ref{l:NTP2}. The second is the following 2-generator group mentioned at the end of Chapter 5 of \cite{read}.  Let $p$ be a prime, and let $(q_i:i\in {\mathbb N})$ be a sequence of distinct primes such that $p^i|q_i-1$ for all $i$. Let $C$ be the Cartesian product of the cyclic groups $C_{q_i}$. Then ${\mathbb Z}_p$ has a natural action on $C$, and the profinite group $G_2=C\rtimes {\mathbb Z}_p$ has rank 2.
Now finite rank {\em pro-$p$}-groups are $p$-adic analytic (and so are NIP if presented as full profinite groups), but finite rank full profinite groups need not be NIP. The group $G_2$, presented as a {\em full} profinite group, is not NIP, by the Claim  in Section 4 below. Note that
$G_2$ is not virtually pronilpotent, but is prosoluble. It would be interesting to see whether either $G_1$ or $G_2$ could be presented as a 2-sorted (but {\em not} full) NIP profinite group.

On the other hand, compact $p$-adic analytic groups are all NIP when presented as full profinite groups. Examples include the  groups $({\mathbb Z}_p,+)$ and ${\rm SL}_2({\mathbb Z}_p)$, which are compact and {\em semi-algebraic}, that is, definable in the $p$-adic field ${\mathbb Q}_p$ in the language of rings.  For $p>2$, the graph of the $p$-adic 
exponential function $(p{\mathbb Z}_p,+)\to (1+p{\mathbb Z}_p,\cdot)$  is a compact $p$-adic analytic group. It is not semi-algebraic, but in the `analytic' expansion ${\mathbb Z}_p^{\an}$ of ${\mathbb Z}_p$ (see Section 3), it is  isomorphic to a semi-algebraic group, namely $({\mathbb Z}_p,+)$. 
\end{example}
 
 Section 2 of the paper contains some background on $p$-adic analytic groups, and we prove Proposition~\ref{dusautoy} in Section 3. Section 4 contains the remainder of the proof of Theorem~\ref{mainconj}. There is some further discussion in Section 5. We show there that {\em any} NIP profinite group (without a fullness assumption) has a prosoluble open subgroup of finite index (Proposition~\ref{prosol}).
 
\medskip

{\em Acknowledgement:} This research was partially supported by EPSRC grant EP/K020692/1
and SFB 878.

\section{Background}

We give a rapid introduction to $p$-adic analytic groups, taken from \cite[Chapter 9]{ddms}. First, if $V\subset {\mathbb Z}_p^r$ is non-empty and open, and $f=(f_1,\ldots,f_s):V\to {\mathbb Z}_p^s$ is a function, and $y\in V$, then $f$ is {\em analytic at $y$} if there are
$F_1,\ldots,F_s\in {\mathbb Q}_p[[X_1,\ldots,X_r]]$ for $i=1,\ldots,s$ and $h\in {\mathbb N}$ such that $f_i(y+p^hx)=F_i(x)$ for each $i=1,\ldots,s$ and $x\in {\mathbb Z}_p^r$. We say $f$ is {\em analytic on $V$} if it is analytic at every point of $V$. 

If $X$ is a topological space, then a {\em $p$-adic chart of dimension $n$} of $X$ is a triple $(U,\phi,n)$ where $U$ is an open subset of $X$, and $\phi$ is a homeomorphism from $U$ onto an open subset
 of ${\mathbb Z}_p^n$. The charts $c=(U,\phi,n)$ and  $d=(V,\psi,m)$ of $X$ are {\em compatible} if, putting $W:=U\cap V$, the maps $\psi \circ \phi^{-1}|_{\phi(W)}$ and 
$\phi\circ\psi^{-1}|_{\psi(W)}$
 are analytic on $\phi(W)$ and $\phi(V)$ respectively. There is a natural notion of ($p$-adic) {\em atlas} on the topological space $X$, consisting of a covering by compatible charts, and an 
equivalence relation of {\em compatibility} of atlases: two atlases $A$ and $B$ of $X$ are compatible if every chart of $A$ is compatible with every chart of $B$. Finally, a {\em ${\mathbb Q}_p$-analytic
 manifold structure} on $X$ is an equivalence class of compatible atlases on $X$, and such a structure is called a {\em $p$-adic analytic manifold}. 

If $G$ is a topological group, then $G$ is a {\em $p$-adic analytic group}, or {\em $p$-adic Lie group}, if $G$ has the structure of a $p$-adic analytic manifold  such that the maps $G\times G\to G$ and $G\to G$, given by group multiplication and inversion respectively, are analytic.

If $H$ is  a subgroup of the group $G$ and $n>0$, put $H^n:=\langle \{h^n:h\in H\}\rangle$. Recall that if $G$ is a pro-$p$-group, then $G$ has a series of closed normal subgroups (the `lower $p$-series')
$G=P_1(G) \geq P_2(G)\geq\ldots$, where $P_{n+1}(G):=\overline{P_{n}(G)^p[P_n(G),G]}$. If $G$ is a finitely generated pro-$p$-group theb the $P_i(G)$ form a base of open neighbourhoods of the identity -- see e.g. \cite[Proposition 1.16(iii)]{ddms}. The pro-$p$ group $G$ is said to be {\em powerful}  if $p$ is odd  and $G/\overline{G^p}$ is abelian, or if $p=2$ and $G/\overline{G^4}$ is abelian. It is {\em uniformly powerful} if (i) $G$ is finitely generated, (ii) $G$ is powerful, and (iii) $|P_i(G):P_{i+1}(G)|=|G:P_2(G)|$ for each $i$. A major  theorem of Lazard \cite{lazard} states that a compact topological group is a $p$-adic analytic group if and only if it has an open subgroup which is a uniformly powerful pro-$p$ group. Building on this, we have the following characterisations, combining  the work of Lazard, Lubotzky, Mann, Segal, and Shalev.

\begin{theorem} \label{lazard}
Let $G$ be a  pro-$p$ group. Then the following are equivalent. 

(i) $G$ is a compact $p$-adic analytic group.

(ii) $G$ is finitely generated and has a uniformly powerful subgroup of finite index.

(iii) $G$ has finite rank.

(iv) $G$ has polynomial subgroup growth, that is, there is $\alpha>0$ such that for each $n>0$, $G$ has at most $n^\alpha$ open subgroups of index $n$.

(v) $G$ is finitely generated and does not involve arbitrarily large wreath products of the form $C_p {\rm wr} C_{p^n}$.

(vi) $G$ is isomorphic to a closed subgroup of ${\rm GL}_d({\mathbb Z}_p)$ for some suitable $d$.

\end{theorem}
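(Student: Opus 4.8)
\emph{Plan.} The six conditions are classical, combining results of Lazard, Lubotzky, Mann, Segal and Shalev. I would take (i) as the hub: use Lazard's theorem (quoted above) to connect it with the uniform/powerful conditions (ii) and (iii), and then invoke the structure theory of finite rank pro-$p$ groups in \cite{ddms} for the remaining equivalences. For \emph{(i) $\Leftrightarrow$ (ii) $\Leftrightarrow$ (iii)}: a uniformly powerful pro-$p$ group is finitely generated by definition, so if $G$ is finitely generated with a uniformly powerful finite-index subgroup $H$, then $H$ is open (a closed subgroup of finite index in a compact group is open, or one may invoke Theorem~\ref{ns}), and Lazard's theorem gives (i); conversely a compact $p$-adic analytic pro-$p$ group has an open uniformly powerful subgroup $H$ by Lazard, and then $G$ is finitely generated since $H$ is, and $H$ has finite index, so (ii) holds. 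For (ii) $\Leftrightarrow$ (iii): a uniformly powerful pro-$p$ group $H$ with $d(H)=n$ is, via the coordinates attached to a minimal generating set, homeomorphic to $\mathbb{Z}_p^n$, and every closed subgroup of $H$ has a generating set of size at most $n$ (\cite[Chapters~3--4]{ddms}); hence any pro-$p$ group containing such an $H$ with finite index has finite rank, since a closed subgroup $K$ is generated by $K\cap H$ (of rank $\le n$) together with at most $|G:H|$ coset representatives. Conversely, if $G$ has finite rank then $G$ is topologically finitely generated, and by the theory of powerful subgroups in \cite{ddms} the group $G$ has a uniformly powerful open subgroup, yielding (ii).

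The equivalence \emph{(iii) $\Leftrightarrow$ (iv)} is the Lubotzky--Mann theorem on polynomial subgroup growth for pro-$p$ groups (\cite[Chapter~3]{ddms}): in one direction, in a pro-$p$ group of rank $r$ an open subgroup of index $p^n$ is generated by at most $r$ elements and is determined by its image in a quotient of bounded order, so there are at most $(p^n)^r$ of them; the converse, that polynomial subgroup growth forces finite rank, is the harder direction, its contrapositive exploiting that a closed subgroup needing $m$ generators already possesses about $p^{m-1}$ subgroups of index $p$, which $G$ inherits in quantities violating the growth bound. The equivalence \emph{(iii) $\Leftrightarrow$ (v)} is Shalev's criterion: finite rank trivially implies (v), since $C_p\,{\rm wr}\,C_{p^n}$ contains an elementary abelian subgroup of rank $p^n$, so a pro-$p$ group involving arbitrarily large such wreath products has infinite rank; the converse, extracting finite rank from the absence of large wreath-product sections, is Shalev's theorem (\cite{ddms}).

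Finally, for \emph{(i) $\Leftrightarrow$ (vi)}: since $\mathrm{GL}_d(\mathbb{Z}_p)$ is open in $M_d(\mathbb{Z}_p)\cong\mathbb{Z}_p^{d^2}$ it is a compact $p$-adic analytic group, and a closed subgroup of a $p$-adic analytic group is $p$-adic analytic by the $p$-adic closed subgroup theorem (\cite[Chapter~9]{ddms}), giving (vi) $\Rightarrow$ (i); for (i) $\Rightarrow$ (vi) one uses that every compact $p$-adic analytic group has a faithful continuous $\mathbb{Z}_p$-linear representation (\cite{ddms}), obtained by linearising a uniform open normal subgroup through its associated $\mathbb{Z}_p$-Lie algebra and inducing the representation up to the finite extension $G$.

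\emph{Main obstacle.} All of the genuinely hard inputs — the reduction from finite rank to a uniform open subgroup, the Lubotzky--Mann subgroup-growth theorem, Shalev's wreath-product criterion, and the linearity theorem — are available in \cite{ddms}, so the remaining work is purely organisational. If one insisted on a self-contained argument, the single hardest step would be (v) $\Rightarrow$ (iii): deducing finite rank from the mere absence of $C_p\,{\rm wr}\,C_{p^n}$ sections requires Shalev's delicate analysis of powerful sections of pro-$p$ groups, and is the one implication that cannot be reduced to routine bookkeeping around Lazard's theorem.
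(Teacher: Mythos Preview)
Your sketch is correct and, in content, agrees with what the paper intends: the paper's own ``proof'' is simply a citation (``See for example Theorem~5.11 of \cite{klopsch}, together with the main theorem of \cite{shalev1} for (v)''), so you are effectively unpacking the references it defers to. One minor correction: Shalev's criterion (v)~$\Rightarrow$~(iii) is the main theorem of \cite{shalev1}, not a result stated in \cite{ddms} as your sketch suggests.
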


\begin{proof} See for example Theorem 5.11 of \cite{klopsch}, together with the main theorem of \cite{shalev1} for (v). 
\end{proof}

We will only be using the characterizations given in parts (i)--(iii).
Parts (iv)--(vi) of the last theorem are not used in this paper. Note that by Theorem~\ref{mainconj}, we may now add to Theorem~\ref{lazard} the model-theoretic characterisation

(vii) the full profinite group $\mathcal{G}=(G,I)$ is NIP.

\section{Proof of Proposition~\ref{dusautoy}.}

We first recall the main results of Denef and van den Dries  \cite{dd} on the structure ${\mathbb Z}_p^{\an}$. Let $X=(X_1,\ldots, X_m)$ consist of $m$ commuting indeterminates. If $\nu=(\nu_1,\ldots,\nu_m)\in {\mathbb N}^m$ is a multi-index, then we put $|\nu|:=\nu_1+\ldots+\nu_m$. Let ${\mathbb Z}_p\{X\}$ denote the ring of all formal power series
$\Sigma_{\nu\in {\mathbb N}^m} a_\nu X^\nu$ where $a_\nu\in {\mathbb Z}_p$ for each $\nu$ and $|a_\nu|_p\to 0$ as $|\nu|\to \infty$ (here $|a|_p$ denotes the $p$-adic norm of $a$). 

The language $L^{\an}_D$ contains for all $m\in\mathbb{N}$ an $m$-ary function symbol $F$ for each $F(X)\in {\mathbb Z}_p\{X\}$, a binary function symbol $D$, and a unary relation symbol $P_n$ 
for 
each $n>0$. We interpret ${\mathbb Z}_p$ as an $L^{\an}_D$-structure as follows: we interpret $P_n$ by the set of non-zero $n^{{\rm th}}$ powers for each $n$, the $m$-ary function symbol $F$ by the function 
induced by $F(X)$ (which converges on ${\mathbb Z}_p^m$), and we interpret $D:{\mathbb Z}_p^2\to {\mathbb Z}_p$ by putting
$D(x,y)=x/y$ if $|x|_p\leq |y|_p$ and $y\neq 0$, and $D(x,y)=0$ otherwise. Let $T^{\an}_D$ be the theory of ${\mathbb Z}_p$ in the language $L^{\an}_D$.
We remark that  by \cite[Lemma 1.9]{dus}, if $f:{\mathbb Z}_p^m\to {\mathbb Z}_p$ is an analytic function, then $f$ is definable in the language $L^{\an}_D$.
This is an elementary application of the topological compactness of ${\mathbb Z}_p^m$. 

By Theorem 1.1  of \cite{dd} the theory $T^{\an}_D$ has quantifier elimination.
For us the following is crucial:
\begin{theorem} \label{dd}
The theory $T^{\an}_D$ is strongly NIP.
\end{theorem}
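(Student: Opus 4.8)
The goal is to show $T^{\an}_D = \Th({\mathbb Z}_p, L^{\an}_D)$ is strongly NIP. The plan is to bootstrap from two facts already available to us: the quantifier elimination result of Denef--van den Dries (Theorem~1.1 of \cite{dd}), and the known strong NIP-ness of the $p$-adic field ${\mathbb Q}_p$ in the language of rings (equivalently Macintyre's language with the $P_n$ predicates), which is a classical result of Bélair / the general theory of $P$-minimal or dp-minimal fields. By quantifier elimination, every $L^{\an}_D$-formula is equivalent to a Boolean combination of atomic formulas; the only ``new'' primitives beyond the semialgebraic language are the convergent power-series function symbols $F\in {\mathbb Z}_p\{X\}$ and the restricted division $D$. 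So it suffices to control the Vapnik--Chervonenkis dimension of families defined using these primitives, and to check that the dp-rank stays finite.

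The key steps, in order, would be: (i) Reduce, via QE, to showing that any $L^{\an}_D$-definable family $\{\phi(\bar x,\bar b):\bar b\}$ has finite VC-dimension; since $D$ is piecewise given by a semialgebraic condition together with ordinary division, and $D$-terms can be eliminated in favour of the valuation ordering plus field division (splitting into the two cases $|x|_p \le |y|_p$ and otherwise), the real content is the function symbols $F$. (ii) Observe that each $F\in {\mathbb Z}_p\{X\}$ converges on all of ${\mathbb Z}_p^m$, and — crucially — by \cite[Lemma 1.9]{dus} (cited in the excerpt) and the Weierstrass-type preparation available for ${\mathbb Z}_p\{X\}$, the zero set and sign/valuation behaviour of such an $F$ is, after a finite partition of ${\mathbb Z}_p^m$ into semialgebraic pieces, given by a polynomial times a unit; one can invoke the cell-decomposition / analytic cell decomposition of Denef--van den Dries to write any $L^{\an}_D$-definable set as a finite union of ``analytic cells'' on each of which the defining data is semialgebraic after an analytic change of coordinates. (iii) Conclude that the trace of any uniformly definable family on a single cell has VC-dimension bounded in terms of the (uniformly bounded) complexity of the semialgebraic data, hence finite; finitely many cells only multiply the VC-dimension by a constant, so the whole family has finite VC-dimension, giving NIP. (iv) For \emph{strong} NIP, upgrade the same cell-decomposition argument: the structure is dp-minimal on the main sort if one tracks that each fibre of an analytic cell is controlled by one valuation-type coordinate, or more modestly, show dp-rank is finite by the standard fact that finitely many dp-minimal (or finite dp-rank) ``coordinates'' add up; alternatively cite that ${\mathbb Q}_p^{\an}$ (and hence ${\mathbb Z}_p^{\an}$, an imaginary/definable reduct) is known to be dp-minimal, which gives strong NIP immediately.

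The main obstacle is step (ii): making precise that the analytic function symbols do not blow up the combinatorial complexity. The clean way to do this is \emph{not} to argue by hand but to lean on the cell-decomposition theorem for ${\mathbb Z}_p^{\an}$ from \cite{dd} (the ``analytic'' analogue of Denef's $p$-adic cell decomposition), which says exactly that every definable set decomposes into finitely many cells on which definable functions become simple; once that is in hand, finiteness of VC-dimension and of dp-rank follow by the same template used for semialgebraic sets over ${\mathbb Q}_p$. In writing this up I would state the needed cell-decomposition as a cited lemma, reduce NIP to the bounded-VC statement on a single cell, and then note that strong NIP follows because the structure is in fact dp-minimal (or, being bi-interpretable-up-to-reduct with a known dp-minimal expansion of ${\mathbb Q}_p$, inherits finite dp-rank). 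This keeps the proof short, since everything hard is already in \cite{dd} and \cite{dus}.
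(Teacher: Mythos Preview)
Your ``alternative'' at the very end of step (iv) --- cite that ${\mathbb Q}_p^{\an}$ is dp-minimal and hence strongly NIP --- is exactly what the paper does, and is the entire proof. The paper observes that ${\mathbb Q}_p^{\an}$ is interpretable in ${\mathbb Z}_p^{\an}$, cites \cite{dhm} for $P$-minimality of ${\mathbb Q}_p^{\an}$, cites \cite{hm} for the implication $P$-minimal $\Rightarrow$ NIP, and then cites \cite{vc} (specifically 7.3, 7.9, 7.10) for dp-minimality of ${\mathbb Q}_p^{\an}$, which gives strongly NIP. No cell decomposition, no direct VC-dimension computation.

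Your main plan (i)--(iii), by contrast, has a genuine gap. In step (ii) you say that after Weierstrass preparation the behaviour of $F\in{\mathbb Z}_p\{X\}$ is ``given by a polynomial times a unit'' and that on each analytic cell ``the defining data is semialgebraic after an analytic change of coordinates''. But the unit in Weierstrass preparation is still an \emph{analytic} function, and an analytic change of coordinates is not definable in the semialgebraic language, so you have not actually reduced to the known NIP of Macintyre's ${\mathbb Q}_p$. Step (iii) then asserts that the trace of a uniformly definable family on a single cell has bounded VC-dimension ``in terms of the complexity of the semialgebraic data'', but there is no semialgebraic data at this point, and in any case cell decomposition is a statement about individual definable sets; to bound VC-dimension of a \emph{family} $\{\phi(\bar x,\bar b):\bar b\}$ you would need a uniform (parametric) cell decomposition together with an argument that the resulting parametrized cells themselves form an NIP family --- which is essentially what you are trying to prove. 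The route through $P$-minimality sidesteps all of this: $P$-minimality is a one-variable condition that is checked directly from the cell decomposition of \cite{dd}, and the implication $P$-minimal $\Rightarrow$ NIP is a soft general fact.
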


\begin{proof} Let ${\mathbb Q}_p^{\an}$ denote the $p$-adic {\em field} equipped with subanalytic structure; the function symbols interpreting elements of ${\mathbb  Z}_p\{X\}$ take value $0$ on ${\mathbb Q}_p^m\setminus {\mathbb Z}_p^m$, so 
${\mathbb Q}_p^{\an}$ is interpretable in ${\mathbb Z}_p^{\an}$. By \cite[Theorem A]{dhm}, the theory of ${\mathbb Q}_p^{\an}$  is $P$-minimal, in the sense of \cite{hm}. By \cite[Proposition 7.1]{hm}, any $P$-minimal theory is NIP. 
By 7.3, 7.9 and 7.10 of \cite{vc}, the structure ${\mathbb Q}_p^{{\rm an}}$ is  dp-minimal, that is, of dp-rank 1, and so is strongly NIP.
\end{proof}


In order to prove Proposition~\ref{dusautoy} it suffices by Theorem~\ref{dd} to prove that if ${\cal G}$ is a uniformly powerful pro-$p$ group, then $G$ together with its family of open subgroups is  definable in $\mathbb{Z}_p^{\an}$. This is essentially contained in Section 2 of \cite{dus}, though it is not stated in this form, so we sketch the arguments from there. (Du Sautoy has a different goal, namely to show that certain Poincar\'e series enumerating subgroups of given finite index are rational.) 

Observe (\cite[Definition 1.11]{dus}) that if $G$ is a pro-$p$ group, then $G$ admits a natural action by $({\mathbb Z}_p,+)$: for $\lambda\in {\mathbb Z}_p$ and $g\in G$, put $g^\lambda:=\lim_{n\to \infty} g^{a_n}$, where $(a_n)$ is any sequence from ${\mathbb Z}$ with limit $\lambda$ (this is well-defined).
Also, we define $\omega:G\to {\mathbb N}\cup\{\infty\}$ by putting $\omega(g)=n$ if $g\in P_n(G)\setminus P_{n+1}(G)$, and $\omega(1)=\infty$. The function $\omega$ is analogous to a valuation on $G$.


We next state a key theorem from \cite{dus}.
\begin{theorem}[Theorem 1.18 of \cite{dus}]\label{dusrep}
Let $G$ be a uniformly powerful pro-$p$-group with $d(G)=d$, and let $\{x_1,\ldots,x_d\}$ be a topological generating set for $G$. Then:

(i) for each $x\in G$ there are unique $\lambda_1,\ldots,\lambda_d\in {\mathbb Z}_p$ such that if $\lambda=(\lambda_1,\ldots,\lambda_d)$, then 
$x=x_1^{\lambda_1}\cdots x_d^{\lambda_d}$ (and is denoted $x(\lambda)$);

(ii) the function $f: {\mathbb Z}_p^d \times {\mathbb Z}_p^d \to {\mathbb Z}_p^d$ determined by the rule  $x(\lambda).(x(\mu))^{-1}=x(f(\lambda,\mu))$ is an analytic function; 

(iii) if $x=x(\lambda)\in G$, then $\omega(x)= \Min\{v(\lambda_i)+1: 1\leq i\leq d\}$.
\end{theorem}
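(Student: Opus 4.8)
The plan is to derive Theorem~\ref{dusrep} from the structure theory of powerful and uniform pro-$p$ groups developed in \cite[Chapters 2, 3, 4, 6, 7]{ddms}. Parts (i) and (iii) are essentially bookkeeping with the lower $p$-series, using that $p$-th powers are well behaved in powerful groups; the substantive statement is (ii), the analyticity of the multiplication map in the global chart furnished by (i), and I would obtain it by passing to the associated $\mathbb{Z}_p$-Lie algebra and invoking the Baker--Campbell--Hausdorff formula.

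For (i), consider $\psi\colon\mathbb{Z}_p^d\to G$, $\psi(\lambda)=x_1^{\lambda_1}\cdots x_d^{\lambda_d}$. I would use the following facts from \cite[Chapters 2, 3]{ddms}: for $G$ powerful and $d$-generated, $P_{i+1}(G)=G^{p^i}=\{g^{p^i}:g\in G\}$, and $G$ is the ordered product $\overline{\langle x_1\rangle}\cdots\overline{\langle x_d\rangle}$ of the procyclic subgroups $\overline{\langle x_i\rangle}=\{x_i^\lambda:\lambda\in\mathbb{Z}_p\}$; and for $G$ uniform, $|P_i(G):P_{i+1}(G)|=p^d$ for all $i$, so $|G:P_{n+1}(G)|=p^{dn}$, and each $P_{i+1}(G)$ is again uniform with topological generating set $x_1^{p^i},\dots,x_d^{p^i}$. \emph{Surjectivity} of $\psi$ is then exactly the product decomposition. \emph{Injectivity}: applying the product decomposition inside each uniform group $P_{n+1}(G)=G^{p^n}$ shows $\psi(p^n\mathbb{Z}_p^d)=P_{n+1}(G)$, so $\psi$ induces a surjection $(\mathbb{Z}/p^n\mathbb{Z})^d\to G/P_{n+1}(G)$ between sets of cardinality $p^{dn}$, hence a bijection for every $n$; therefore $\psi$ is injective (and a homeomorphism). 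This proves (i) and records the identity $P_{n+1}(G)=\{x(\lambda):\lambda\in p^n\mathbb{Z}_p^d\}$.

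Part (iii) then follows at once: for $\lambda\neq 0$ we have $x(\lambda)\in P_{n+1}(G)$ iff $v(\lambda_i)\geq n$ for all $i$, i.e. iff $\Min\{v(\lambda_i)+1:1\leq i\leq d\}\geq n+1$; hence $x(\lambda)\in P_n(G)\setminus P_{n+1}(G)$ exactly when $\Min\{v(\lambda_i)+1:1\leq i\leq d\}=n$, which is the assertion (and for $\lambda=0$ both sides are $\infty$).

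For (ii), which I expect to be the main obstacle, I would associate to $G$ its $\mathbb{Z}_p$-Lie algebra $L=L(G)$ with underlying set $G$, addition $x+y:=\lim_n(x^{p^n}y^{p^n})^{p^{-n}}$ and bracket $[x,y]_L:=\lim_n[x^{p^n},y^{p^n}]^{p^{-2n}}$: by \cite[Chapters 4, 6, 7]{ddms} these limits exist, $(L,+)\cong\mathbb{Z}_p^d$ is a free $\mathbb{Z}_p$-module with basis $x_1,\dots,x_d$ (they reduce to a basis of $L/pL\cong G/P_2(G)$, so Nakayama applies), and $\mathbb{Z}_p$-scalar multiplication on $L$ coincides with $\mathbb{Z}_p$-powering in $G$, so $x^\lambda=\lambda\cdot x$. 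Uniformity forces $[L,L]\subseteq pL$ (with a stronger condition when $p=2$), so the BCH series $\Phi(X,Y)=X+Y+\tfrac12[X,Y]_L+\cdots$ converges on all of $L\times L$ and recovers the group law, $g\cdot h=\Phi(g,h)$, as a convergent power series in Lie-coordinates. Writing Lie-coordinates relative to $x_1,\dots,x_d$ and letting $*$ denote the BCH product on $L$, the map $\phi\colon\mathbb{Z}_p^d\to\mathbb{Z}_p^d$ taking $\lambda$ to the Lie-coordinates of $x(\lambda)=(\lambda_1x_1)*\cdots*(\lambda_dx_d)$ is analytic, and since every BCH correction lies in $pL$ we have $\phi(\lambda)\equiv\lambda\pmod p$, so the Jacobian of $\phi$ is everywhere $\equiv I\pmod p$ and has determinant in $\mathbb{Z}_p^\times$; as $\phi$ is also a bijection (the composite of the bijection of (i) with the Lie-chart), the $p$-adic inverse function theorem shows $\phi^{-1}$ is analytic. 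Finally $f(\lambda,\mu)=\phi^{-1}\!\big(\Phi(\phi(\lambda),-\phi(\mu))\big)$ is a composition of analytic maps, hence analytic, which gives (ii). The crux throughout is the convergence of the BCH series on $L\times L$, which is exactly where uniformity, rather than mere powerfulness or finite rank, is used.
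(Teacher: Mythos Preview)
The paper does not prove this theorem at all: it is simply quoted as Theorem~1.18 of \cite{dus}, with no argument supplied. So there is nothing to compare your proposal against in the paper itself; you have written out a proof where the authors only gave a reference.

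That said, your sketch is essentially the standard argument, as developed in \cite[Chapters~3, 4, 7]{ddms}, and it is correct in outline. Parts (i) and (iii) are exactly the bookkeeping you describe: the product decomposition $G=\overline{\langle x_1\rangle}\cdots\overline{\langle x_d\rangle}$ together with the index count $|G:P_{n+1}(G)|=p^{dn}$ gives the bijection $\psi$ and the identification $P_{n+1}(G)=\psi(p^n\mathbb{Z}_p^d)$, from which (iii) is immediate. For (ii), your route through the $\mathbb{Z}_p$-Lie algebra $L(G)$ and the Baker--Campbell--Hausdorff formula is precisely the approach of \cite[Chapter~7]{ddms} (and underlies du~Sautoy's statement). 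The observation that $\phi(\lambda)\equiv\lambda\pmod{p}$, so that the Jacobian is a unit, is the right way to invoke the $p$-adic inverse function theorem and recover analyticity of $\phi^{-1}$.

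Two small points worth tightening if you want a self-contained write-up: first, the convergence of the BCH series on all of $L\times L$ requires a genuine estimate on the $p$-adic valuation of the BCH coefficients against the filtration $[L,L]\subseteq pL$ (respectively $4L$ when $p=2$); you gesture at this but it is where the work lies. Second, rather than building $\phi$ by iterated BCH products and then inverting, the cleaner formulation in \cite{ddms} is via the $\log$ and $\exp$ maps between $G$ and $L$, which are shown directly to be mutually inverse analytic homeomorphisms; your $\phi$ is then a composite of $\psi$ with $\log$ and a linear change of basis. Either way the conclusion is the same.
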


We summarize this as:
\begin{corollary}\label{c:defG}
Any uniformly powerful pro-$p$-group $G$ of rank $d$ is (isomorphic to) a group  definable in the $L^{\an}_D$-structure ${\mathbb Z}^{\an}_p$ with domain ${\mathbb Z}_p^d$.
Furthermore, the function  $\omega$ is definable in this structure.
\end{corollary}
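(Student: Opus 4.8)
The plan is to derive Corollary~\ref{c:defG} directly from Theorem~\ref{dusrep}, the observation preceding it that analytic functions $f:{\mathbb Z}_p^m\to{\mathbb Z}_p$ are definable in $L^{\an}_D$ (by \cite[Lemma 1.9]{dus}), and the fact that $v$ (the $p$-adic valuation, on ${\mathbb Z}_p$) is definable in ${\mathbb Z}_p^{\an}$ — indeed $v$ is already definable in ${\mathbb Q}_p$ via the predicates $P_n$, since $v(x)\le v(y)$ iff $D(y,x)y=x$ holds with $x\neq 0$, or more simply using that the valuation ring and ideals are $\emptyset$-definable.

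First I would fix a topological generating set $\{x_1,\ldots,x_d\}$ of $G$ and use Theorem~\ref{dusrep}(i) to identify the underlying set of $G$ with ${\mathbb Z}_p^d$ via the coordinate map $\lambda\mapsto x(\lambda)$; this is a bijection, so the group structure transports to a group operation on ${\mathbb Z}_p^d$. Next I would use Theorem~\ref{dusrep}(ii): the map $(\lambda,\mu)\mapsto f(\lambda,\mu)$ with $x(\lambda)x(\mu)^{-1}=x(f(\lambda,\mu))$ is analytic ${\mathbb Z}_p^d\times{\mathbb Z}_p^d\to{\mathbb Z}_p^d$, hence definable in ${\mathbb Z}_p^{\an}$ by \cite[Lemma 1.9]{dus} applied coordinatewise. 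From $f$ one recovers inversion as $\mu\mapsto f(\mathbf 0,\mu)$ and multiplication as $(\lambda,\mu)\mapsto f(\lambda, f(\mathbf 0,\mu))$, so the full group structure on ${\mathbb Z}_p^d$ is $\emptyset$-definable (relative to the parameters naming the generators, which are not even needed since the coordinates are the domain elements). Thus $G$ is isomorphic to a group definable in ${\mathbb Z}_p^{\an}$ with domain ${\mathbb Z}_p^d$.

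Finally, for definability of $\omega$: by Theorem~\ref{dusrep}(iii), for $x=x(\lambda)$ we have $\omega(x)=\Min\{v(\lambda_i)+1:1\le i\le d\}$, and since $v$ is definable on ${\mathbb Z}_p$ and the minimum of finitely many definable quantities is definable, the graph of $\omega$ (say as a relation between $\lambda\in{\mathbb Z}_p^d$ and a nonnegative-integer value, coded however $I$ is coded) is definable; strictly, one defines the family of subgroups $P_n(G)=\{\lambda: v(\lambda_i)+1\ge n \text{ for all }i\}=\{\lambda:\lambda_i\in p^{n-1}{\mathbb Z}_p\}$, which is manifestly a uniformly definable family.

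I do not expect any genuine obstacle here: the corollary is a formal repackaging of Theorem~\ref{dusrep} once one knows that analytic functions and the valuation are definable in ${\mathbb Z}_p^{\an}$. The only mild point requiring care is the reduction from "$f$ analytic hence definable" — this rests on \cite[Lemma 1.9]{dus}, whose proof uses topological compactness of ${\mathbb Z}_p^d$ to cover the domain by finitely many balls on each of which the given power series converges and can be matched against a convergent series in ${\mathbb Z}_p\{X\}$ after rescaling — but this is quoted in the excerpt and may be assumed. The bookkeeping of how the second sort $I$ and the relation $K$ of the 2-sorted structure $\mathcal{G}$ are literally interpreted from this data is deferred to the main interpretation argument (of which this corollary is the core input), so here it suffices to exhibit the definable group together with the definable lower $p$-series filtration.
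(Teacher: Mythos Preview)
Your proposal is correct and follows essentially the same approach as the paper: fix a generating set, use Theorem~\ref{dusrep}(i) to identify $G$ with ${\mathbb Z}_p^d$, invoke (ii) for definability of the group operation via analyticity of $f$, and invoke (iii) for definability of $\omega$. You simply spell out more than the paper does---in particular, how to recover multiplication and inversion separately from the single analytic map $f(\lambda,\mu)$ encoding $x(\lambda)\cdot x(\mu)^{-1}$, and why the $p$-adic valuation is definable---whereas the paper's proof is a terse three-line citation of parts (i)--(iii).
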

\begin{proof}
Fix an ordered  topological generating set $(x_1,\ldots,x_d)$. By (i) of Theorem~\ref{dusrep},  we may identify $G$ with an isomorphic copy of $G$ with domain  ${\mathbb Z}_p^d$, with the group structure definable in ${\mathbb Z}_p^{{\rm an}}$ by (ii). By (iii), $\omega$ is also definable.
\end{proof}

We shall refer to an ordered  topological generating set for $G$ of size $d(G)$ as a {\em basis} of $G$. It acts as a system of coordinates for $G$.
It remains to show that the family of open subgroups is also uniformly definable. For this we first note

\begin{lemma} \label{dusexp}
Let $G$ be a uniformly powerful pro-$p$ group with a basis $\{x_1,\ldots,x_d\}$. Then
 the action (described above) of ${\mathbb Z}_p$ on $G$ by exponentiation is definable in ${\mathbb Z}_p^{{\rm an}}$.
\end{lemma}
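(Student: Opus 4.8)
The plan is to leverage the coordinate system from Corollary~\ref{c:defG}: fix the basis $\{x_1,\ldots,x_d\}$ and identify $G$ with ${\mathbb Z}_p^d$, so that the exponentiation action becomes a map ${\mathbb Z}_p\times{\mathbb Z}_p^d\to{\mathbb Z}_p^d$, $(\lambda,\mu)\mapsto (x(\mu))^\lambda$ expressed in coordinates. First I would observe that it suffices to handle the case of a \emph{single} coordinate vector, i.e.\ to show that for a fixed $g=x(\mu)$ the map $\lambda\mapsto g^\lambda$ is analytic (uniformly in $\mu$), because the full action is then obtained by substituting the analytic parametrisation of $\mu\mapsto x(\mu)$ which is already built into the identification. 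The key point is that for integer values $n\in{\mathbb N}$, $g^n$ is computed by iterating the analytic multiplication function $f$ of Theorem~\ref{dusrep}(ii), and the definition $g^\lambda:=\lim_{n\to\infty}g^{a_n}$ says exactly that the coordinate functions $\lambda\mapsto g^\lambda$ extend these integer-indexed iterates continuously to all of ${\mathbb Z}_p$.

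The main step is to show that this continuous extension is in fact \emph{analytic} in $\lambda$ (jointly with $\mu$), so that it is definable in ${\mathbb Z}_p^{\an}$ by the remark (from \cite[Lemma 1.9]{dus}) that analytic functions on ${\mathbb Z}_p^m$ are $L_D^{\an}$-definable. For this I would use the theory of uniformly powerful pro-$p$ groups as ${\mathbb Z}_p$-modules `up to higher order': in such a group the map $\lambda\mapsto x_i^\lambda$ is, in the given coordinates, literally $\lambda\mapsto \lambda e_i$ (the $i$-th standard basis vector), so the exponentiation of a single generator is plainly analytic — indeed linear. For a general $g=x(\mu)$ one writes $g^\lambda = x_1^{\lambda_1(\lambda)}\cdots x_d^{\lambda_d(\lambda)}$ and must show each exponent $\lambda_i(\lambda)$ is an analytic function of $\lambda$ and $\mu$. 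One clean way is the power-series / Hausdorff-series approach available for uniform groups: on a uniform pro-$p$ group the operations $\log$ and $\exp$ between $G$ and its associated ${\mathbb Z}_p$-Lie algebra are analytic bijections (this is part of the Lazard correspondence used implicitly in \cite{dus}, Section~1), and under $\log$ the map $g\mapsto g^\lambda$ becomes scalar multiplication $v\mapsto \lambda v$ on ${\mathbb Z}_p^d$, which is manifestly analytic in $(\lambda,v)$; composing with the analytic maps $\log$ and $\exp$ gives analyticity of $(\lambda,\mu)\mapsto g^\lambda$ in coordinates. Alternatively, and perhaps more self-containedly, one can cite that this exponentiation map appears explicitly in \cite{dus} (it is the map underlying Definition 1.11 and the surrounding discussion) and is there shown, or readily seen, to be analytic.

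Having established that $E:{\mathbb Z}_p\times{\mathbb Z}_p^d\to{\mathbb Z}_p^d$, $E(\lambda,\mu)=$ coordinates of $x(\mu)^\lambda$, is analytic, I conclude by the cited fact that $E$ is definable in ${\mathbb Z}_p^{\an}$, which is the statement of the lemma (the graph of the action, as a subset of ${\mathbb Z}_p\times G\times G$, is then definable). I would also record for later use — since the next step in the paper is to define the family of open subgroups — that definability of exponentiation together with definability of $\omega$ from Corollary~\ref{c:defG} lets one express membership in subgroups of the form $\overline{\langle g_1,\ldots,g_k\rangle}={\{g_1^{\lambda_1}\cdots g_k^{\lambda_k}:\lambda_i\in{\mathbb Z}_p\}}$ by an existential formula.

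The main obstacle I anticipate is the passage from continuity to analyticity of $\lambda\mapsto g^\lambda$: the definition via limits only yields a continuous extension, and turning this into an \emph{analytic} dependence requires genuinely invoking the uniform-group structure (the $\log/\exp$ correspondence, or equivalently the explicit binomial-type power series for iterated multiplication that converges because $G$ is powerful). Everything after that — substituting into the already-analytic group law and appealing to $L_D^{\an}$-definability of analytic functions — is routine.
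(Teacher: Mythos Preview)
Your proposal is correct and lands on the same key point as the paper: the exponentiation map $(\lambda,\mu)\mapsto x(\mu)^\lambda$, written in the ${\mathbb Z}_p^d$-coordinates, is analytic, hence definable in ${\mathbb Z}_p^{\an}$. The paper's proof is a one-line citation of \cite[Lemma 1.19]{dus} for analyticity (exactly your ``alternatively'' route), whereas your primary argument via the Lazard $\log/\exp$ correspondence supplies an independent reason why the map is analytic rather than merely continuous. Both are valid; your $\log/\exp$ argument is a genuine proof sketch of what \cite[Lemma 1.19]{dus} asserts, so it is more self-contained but also more work than the paper's bare citation. Your preliminary reduction to a ``single coordinate vector'' is unnecessary --- analyticity is proved directly for the full map $(\lambda,\mu)\mapsto g(\lambda,\mu)$ in \cite{dus} --- but it does no harm.
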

\begin{proof}
The function $g:{\mathbb Z}_p^d\times {\mathbb Z}_p \to {\mathbb Z}_p^d$ defined by
$$x(\lambda_1,\ldots,\lambda_d)^\mu =x(g(\lambda,\mu))$$
is an analytic function on ${\mathbb Z}_p^d \times {\mathbb Z}_p$ (see \cite[Lemma 1.19]{dus}), so interprets a function symbol of $L^{\an}_D$ and so is definable.
\end{proof}

We shall put $G_n:=P_n(G)$ for each $n\geq 1$. For the following, see the discussion after \cite[Theorem 2.1]{dus}. The map $x\mapsto x^p$ yields an isomorphism  $f_n:G_n/G_{n+1} \to G_{n+1}/G_{n+2}$: if $x=x(\lambda) \in G_n$
(in the notation of Theorem~\ref{dusrep}) then $f_n(x(\lambda)G_{n+1})=x(p\lambda)G_{n+2}$. Let $\pi:{\mathbb Z}_p\to {\mathbb F}_p$ denote the residue map, and for each $n\geq 1$ define $\pi_n:G_n\to {\mathbb F}_p^d$ by putting
$$\pi_n(x(\lambda))=(\pi(p^{-(n-1)}\lambda_1), \ldots,\pi(p^{-(n-1)}\lambda_d)).$$ Then $\pi_n$ is a group homomorphism with kernel $G_{n+1}$, so induces a group isomorphism 
$G_n/G_{n+1}\to (({\mathbb F}_p )^d,+)$. 

\begin{definition}[Definition 2.2 of \cite{dus}]
Let $G$ be a uniformly powerful pro-$p$ group with $d(G)=d$. If $H$ is an open subgroup of $G$, then  the tuple $(h_1,\ldots,h_d)\in H^d$ is a {\em good basis} of $H$ if

(i) $\omega(h_i)\leq \omega(h_j)$ whenever $i\leq j$, and

(ii) for each $n\in {\mathbb N}$,  if $I_n:=\{j: \omega(h_j)=n\}$ then  $\{\pi_n(h_j):j\in I_n\}$  extends the linearly independent set $\{\pi_n(h_j^{p^{n-\omega(h_j)}}): j\in I_1 \cup \ldots \cup I_{n-1}\}$ to a basis of $\pi_n(H\cap P_n(G))$.
\end{definition}

With this definition we have the following:
\begin{lemma}[Lemma 2.5 of \cite{dus}]\label{l:goodbasis}
Let $G$ be a uniformly powerful pro-$p$ group with $d(G)=d$.
The tuple $(h_1,\ldots,h_d)\in G^d$ is a good basis for the  open subgroup $H$ of $G$ if and only if

(i) $\omega(h_i)\leq \omega(h_j)$ whenever $i\leq j$,

(ii) $h_i\neq 1$ for each $i=1,\ldots,d$,

(iii) $\{h_1^{\lambda_1}\cdots h_d^{\lambda_d}:\lambda_i\in {\mathbb Z}_p\}$ is a subgroup of $G$ (and equals $H$), 

(iv) for all $\lambda_1,\ldots,\lambda_d\in {\mathbb Z}_p$ we have $\omega(h_1^{\lambda_1}\cdot\ldots\cdot h_d^{\lambda_d})=\Min\{\omega(h_i)+\nu(\lambda_i):1\leq i\leq d\}$.\\
Furthermore, by \cite[Lemma 2.7]{dus}, every open subgroup of $G$ has a good basis.
\end{lemma}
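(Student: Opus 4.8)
The plan is to deduce the whole statement from the structure of the associated graded of the lower $p$-series. First I would use Corollary~\ref{c:defG} to identify $G$ with $\mathbb{Z}_p^d$, so that $P_n(G)=(p^{n-1}\mathbb{Z}_p)^d$ and, by Theorem~\ref{dusrep}(iii), $\omega(x(\lambda))=\Min\{\nu(\lambda_i)+1\}$, and then record the standard facts about uniformly powerful pro-$p$ groups (see \cite[Chapters 3 and 4]{ddms}) used throughout. Writing $\operatorname{gr}_n:=P_n(G)/P_{n+1}(G)$, an $\mathbb{F}_p$-vector space of dimension $d$, abelian since $[P_n(G),G]\subseteq P_{n+1}(G)$, these facts are: (a) $\mathbb{Z}_p$ acts on $P_n(G)$ by exponentiation, $P_{n+1}(G)$ lies in the kernel of the induced action on $\operatorname{gr}_n$, and there $g^\lambda$ has image $\bar\lambda\cdot\pi_n(g)$ for $g\in P_n(G)$; (b) the $p$-th power map induces an isomorphism $f_n:\operatorname{gr}_n\to\operatorname{gr}_{n+1}$ sending $\pi_n(g)$ to $\pi_{n+1}(g^p)$; (c) $\omega(g^\lambda)=\omega(g)+\nu(\lambda)$; hence (d) for $g_1,\dots,g_r\in G$ with $m:=\Min_i\omega(g_i)$ one has $\omega(g_1\cdots g_r)\geq m$, with equality exactly when $\sum_{i:\,\omega(g_i)=m}\pi_m(g_i)\neq 0$ in $\operatorname{gr}_m$ (this sum, $\operatorname{gr}_m$ being abelian, is the image of $g_1\cdots g_r$ there). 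Combining (a) and (c), for $g\neq 1$ and $\lambda=p^k u$ with $u$ a unit, $\pi_{\omega(g)+k}(g^\lambda)=\bar u\cdot\pi_{\omega(g)+k}(g^{p^k})$.

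For the direction from Definition~2.2 to (i)--(iv) I would argue as follows. Condition (i) is Definition~2.2(i). For (ii): each $h_j\neq 1$ lies in a unique $I_n=\{j:\omega(h_j)=n\}$, and for $n$ large enough that $P_n(G)\leq H$ one has $\pi_n(H\cap P_n(G))=\operatorname{gr}_n$, of which Definition~2.2(ii) makes $\{\pi_n(h_j^{\,p^{\,n-\omega(h_j)}}):j\in I_1\cup\cdots\cup I_n\}$ a basis, forcing $\sum_n|I_n|=d$, so no $h_j$ is trivial. For (iv): the case all $\lambda_i=0$ being trivial, put $m:=\Min_i\{\omega(h_i)+\nu(\lambda_i)\}$ and, for each $i$ attaining the minimum, write $\lambda_i=p^{\,m-\omega(h_i)}u_i$ with $u_i$ a unit; by the last display and (d),
\[
\pi_m\bigl(h_1^{\lambda_1}\cdots h_d^{\lambda_d}\bigr)=\sum_{i:\,\omega(h_i)+\nu(\lambda_i)=m}\bar u_i\,\pi_m\bigl(h_i^{\,p^{\,m-\omega(h_i)}}\bigr),
\]
a non-zero linear combination of pairwise distinct members of the basis of $\pi_m(H\cap P_m(G))$ from Definition~2.2(ii) at level $m$, hence non-zero, so $\omega(h_1^{\lambda_1}\cdots h_d^{\lambda_d})=m$. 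For (iii): the set $S:=\{h_1^{\lambda_1}\cdots h_d^{\lambda_d}:\lambda_i\in\mathbb{Z}_p\}$ is compact as a continuous image of $\mathbb{Z}_p^d$, and I would show by a normal-form/successive-approximation argument up the lower $p$-series, correcting the error at each level using Definition~2.2(ii), that $S$ is a closed subgroup, in fact $\overline{\langle h_1,\dots,h_d\rangle}$, so $S\subseteq H$; then, computing $\pi_n(S\cap P_n(G))$ by (iv) just as in the display, I get $\pi_n(S\cap P_n(G))=\pi_n(H\cap P_n(G))$ for all $n$, whence an induction on $n$ gives $S\cdot P_n(G)=H\cdot P_n(G)$ for all $n$ and so $S=H$, both being closed.

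For the converse ((i)--(iv) $\Rightarrow$ Definition~2.2), only Definition~2.2(ii) needs proof. Linear independence of $\{\pi_n(h_j^{\,p^{\,n-\omega(h_j)}}):j\in I_1\cup\cdots\cup I_n\}$ is the contrapositive of the equality clause in (d): lifting a non-trivial $\mathbb{F}_p$-dependence to $\lambda_j\in\mathbb{Z}_p$ with $\nu(\lambda_j)=n-\omega(h_j)$ on the non-zero coefficients and strictly larger elsewhere would give, by (iv), an element $h_1^{\lambda_1}\cdots h_d^{\lambda_d}$ of $\omega$-value $n$ yet with zero image in $\operatorname{gr}_n$. For the spanning part, any $y\in H\cap P_n(G)$ equals $h_1^{\lambda_1}\cdots h_d^{\lambda_d}$ by (iii), and (iv) forces $\nu(\lambda_i)\geq n-\omega(h_i)$ for all $i$, so $\pi_n(y)\in\operatorname{span}\{\pi_n(h_i^{\,p^{\,n-\omega(h_i)}}):\omega(h_i)\leq n\}$; separating this set into the indices with $\omega(h_i)=n$ (where $p^{\,n-\omega(h_i)}=1$) and those with $\omega(h_i)<n$ is precisely Definition~2.2(ii). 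For the final assertion, given an open $H$ I would take $m$ least with $P_m(G)\leq H$ and build $h_1,\dots,h_d$ by induction on $\omega$-value $n$: having fixed those of value $<n$, set $W_n:=\operatorname{span}\{\pi_n(h_j^{\,p^{\,n-\omega(h_j)}}):\omega(h_j)<n\}$, note $W_n=f_{n-1}(\pi_{n-1}(H\cap P_{n-1}(G)))$ by (b) and the previous step, and adjoin elements of value exactly $n$ whose $\pi_n$-images extend a basis of $W_n$ to one of $\pi_n(H\cap P_n(G))$; the number adjoined at level $n$ is $\dim\pi_n(H\cap P_n(G))-\dim\pi_{n-1}(H\cap P_{n-1}(G))$, so (with $\dim\pi_0:=0$) the process selects exactly $\dim\operatorname{gr}_m=d$ elements in all, at levels $\leq m$, and ordering them by $\omega$-value yields a good basis.

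The step I expect to cost the most care is the passage between the multiplicative group $G$ and its additive associated graded: establishing (d) with the correct equality condition — which is exactly where the independence hypothesis of Definition~2.2(ii) enters — and, in the first direction, verifying that $S$ really is a subgroup equal to $H$. Everything else is linear algebra over $\mathbb{F}_p$ together with bookkeeping along the lower $p$-series.
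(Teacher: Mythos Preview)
The paper does not give a proof of this lemma at all: the statement is imported verbatim as ``Lemma 2.5 of \cite{dus}'' (with the existence clause attributed to \cite[Lemma 2.7]{dus}), and Macpherson--Tent simply cite du~Sautoy rather than reprove it. So there is nothing in the paper to compare your argument against; what you have written is effectively a reconstruction of du~Sautoy's proof.

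As a reconstruction it is broadly on the right track and captures the essential mechanism: the equivalence is governed by the associated graded $\bigoplus_n P_n(G)/P_{n+1}(G)$, condition (iv) is equivalent to linear independence of the $\pi_n$-images at each level, and the existence proof is the greedy construction up the lower $p$-series. Your facts (a)--(d) are the correct toolkit, and your derivations of (ii), (iv), and of Definition~2.2(ii) from (i)--(iv) are clean.

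The one place where your sketch is genuinely incomplete is exactly the one you flag: showing that $S=\{h_1^{\lambda_1}\cdots h_d^{\lambda_d}\}$ is a subgroup. ``Normal-form/successive-approximation'' names the method but hides the work. Concretely, given $x,y\in S$ you must produce $\mu\in\mathbb{Z}_p^d$ with $xy^{-1}=h_1^{\mu_1}\cdots h_d^{\mu_d}$; the standard way is to correct the leading term in $\operatorname{gr}_m$ (using that the $\pi_m$-images of the relevant $h_i^{p^{m-\omega(h_i)}}$ span the image of $xy^{-1}$, which you have from Definition~2.2(ii)), pass to the error in $P_{m+1}(G)$, and iterate, checking that the resulting sequence of exponent tuples converges $p$-adically. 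This is routine but not a one-liner; in du~Sautoy it is packaged separately (and in \cite{ddms} the analogous statement is the ``product decomposition'' for powerful groups). If you intend to present a self-contained proof rather than a citation, this step should be written out.
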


We complete the proof of Proposition~\ref{dusautoy}. So let $G$ be a uniformly powerful pro-$p$ group with $d(G)=d$, and let $\mathcal{G}=(G,I)$ be the corresponding 2-sorted full profinite group. 
Since the map $\omega$ and the $\mathbb{Z}_p$-action on $G$ are   definable in ${\mathbb Z}_p^{\an}$, it follows from Lemma~\ref{l:goodbasis} that the set of all good bases of open subgroups of $G$ is definable in ${\mathbb Z}_p^{\an}$. 

Furthermore, again using Lemma~\ref{l:goodbasis}, the set of elements of an open subgroup $H$ is  definable in a uniform way from the good basis as a parameter. Thus, there is a definable equivalence relation on the collection of all good bases, with two good bases equivalent if they are good bases for the same open subgroup, and we may identify the index set $I$ with the set of equivalence classes of good bases.
Since the relation $K\subset G\times I$ is definable, the structure $\mathcal{G}=(G,I)$ is interpretable in ${\mathbb Z}^{\an}_p$ (over $L^{\an}_D$). Hence, by Theorem~\ref{dd}, the theory of $\mathcal{G}$ is strongly NIP.

\section{Proof of Theorem~\ref{mainconj}.}

As noted in the Introduction, the only non-trivial parts of the proof of Theorem~\ref{mainconj} are $1.\Rightarrow 2.$ and $4.\Rightarrow 1.$.

{\em Proof of Theorem~\ref{mainconj}\/($1.\Rightarrow 2.$)}
 Let $\mathcal{G}$ satisfy the condition (1.) in Theorem~\ref{mainconj}. By Lazard's Theorem (contained in Theorem~\ref{lazard}),
 any $p$-adic analytic group has a uniformly powerful pro-$p$ normal subgroup of finite index. Thus, we may suppose that $p_1,\ldots,p_t$ are distinct  primes, and that for each $i=1,\ldots,t$, there is a uniformly powerful pro-$p_i$-group $P_i$, and that $N:=P_1\times \ldots \times P_t$ is a normal subgroup of $G$ of finite index.  Let $M$ be the disjoint union of the rings ${\mathbb Z}_{p_i}^{\an}$ (viewed in a language in which the rings have formally disjoint languages).  Then $M$  has finite dp-rank by \cite[Theorem 4.8]{kou} and hence is strongly NIP.
By Proposition~\ref{dusautoy}, each $P_i$ is interpretable in ${\mathbb Z}_{p_i}^{\an}$ when viewed as a full 2-sorted profinite group. Furthermore, any open subgroup of  $N$ is a direct product of open subgroups of the $P_i$, essentially because the same statement holds in finite groups -- any finite nilpotent group is a direct product of its Sylow subgroups. Hence, easily, the full profinite group  $N$
 is interpretable in $M$, and thus strongly NIP.

Let $F:=G/N$. As a group, $G$ is determined by the pair $(N,F)$ together with a pair $(\mu,f)$, where 
$\mu:F\to \Aut(N)$  and $f:F\times F\to N$  are  functions -- see e.g. Section IV.6 of \cite{brown}. Since $F$ is finite, the map $f$ is definable in $(N,F)$ by naming finitely many constants. In addition, by \cite[Theorem 1.18 (iii)]{dus}, if $H$ is any uniformly powerful  pro-$p$ group  living on ${\mathbb Z}_p^d$ as in the last section, then any automorphism $\phi$ of $H$ corresponds to an analytic (so ${\mathbb Z}^{\an}_p$-definable) map $\Phi:{\mathbb Z}_p^d \to {\mathbb Z}_p^d$ given by
$$x(\lambda)^\phi=x(\Phi(\lambda)).$$
Thus, for each $g\in F$, the automorphism $\mu(g)$, being a tuple of such maps $\phi$,  is definable in the  strongly NIP structure $M$. It follows that the group $G$ is definable in a strongly NIP structure, namely the disjoint union of $M$ and the finite structure $F$.

It remains to check that the open subgroups of $G$ are uniformly definable. However, if $H$ is an open subgroup of $G$, then $H\cap N$ is an open subgroup of $N$ so is (uniformly) definable in the full profinite group $N$, and $H/H\cap N \cong HN/N\leq G/N$. 
Thus, there are $h_1,\ldots,h_e\in H$ (where $e\leq |G/N|$, so is bounded) such that
$$H=(H\cap N)h_1 \cup \ldots\cup (H\cap N)h_e.$$
Since the set of all tuples $(h_1,\ldots,h_e)$ which yield a subgroup of $G$ in this way is definable, and we can define when two such tuples yield the same group, the groups $H$ are uniformly definable in the structure $M$, as required. $\Box$

\medskip

{\em Proof of Theorem~\ref{mainconj} } ($4.\Rightarrow 1$.) 
First, recall the following well-known facts. If $H$ is a subgroup of the profinite group $G$ we write $H\leq_O G$ (respectively $H\leq_C G$) if $H$ is open (respectively closed) in $G$; we adopt a corresponding notation for normal subgroups. If $F$ is a finite group, then $\Phi(F)$ denotes the {\em Frattini subgroup} of $F$, that is, the intersection of the maximal subgroups of $F$. Extending earlier notation, if $G$ is a finite group then $d(G)$ denotes the smallest size of a generating set for $G$.

\begin{proposition} \label{genfacts}
(i) Let $G$ be a profinite group. Then 
$$\rk(G):=\Sup\{d(H): H\leq_C G\}=\Sup\{\rk(G/N): N\triangleleft_O G\}$$
(possibly infinite).

(ii) Let $G$ be a finite  group such that every Sylow subgroup can be generated by $d$ elements. Then $d(G)\leq d+1$.

(iii) Let $P$ be a finite $p$-group, with Frattini subgroup $\Phi(P)$. The $P/\Phi(P)$ is an elementary abelian $p$-group of rank $d(P)$.
\end{proposition}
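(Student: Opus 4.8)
The plan is to treat the three parts in turn; each is a standard fact, and only part (ii) requires anything genuinely deep.

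\emph{Part (iii)} is the Burnside basis theorem. Since $P$ is a finite $p$-group, $\Phi(P)=P^p[P,P]$, so $P/\Phi(P)$ is abelian of exponent dividing $p$, hence an elementary abelian $p$-group; let $d$ be its rank as an $\mathbb{F}_p$-vector space. The key point is that $\Phi(P)$ consists of non-generators: if $X\subseteq P$ and $\langle X\rangle\Phi(P)=P$, then $\langle X\rangle=P$ (otherwise $\langle X\rangle$ is contained in a maximal subgroup, which contains $\Phi(P)$, a contradiction). Hence a subset of $P$ generates $P$ if and only if its image generates $P/\Phi(P)$, so $d(P)=d(P/\Phi(P))=d$.

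\emph{Part (ii)} is precisely the theorem of Guralnick and, independently, Lucchini: if every Sylow subgroup of a finite group $G$ can be generated by $d$ elements, then $G$ can be generated by $d+1$ elements. I would simply invoke this result. (For soluble $G$ there is an elementary argument via the Frattini argument together with bounds on the generation of chief factors, but the general statement lies deeper and I would not reprove it here.)

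\emph{Part (i).} Write $r:=\Sup\{\rk(G/N):N\triangleleft_O G\}$. For the inequality $r\leq\rk(G)$: fix $N\triangleleft_O G$; every subgroup of the finite group $G/N$ has the form $M/N$ where $N\leq M\leq G$, and then $M$ is open, hence closed, so $d(M/N)\leq d(M)\leq\rk(G)$; taking the supremum over subgroups of $G/N$ and then over $N$ gives the claim. For the reverse inequality we may assume $r<\infty$. Let $H\leq_C G$; the subgroups $H\cap N$ (for $N\triangleleft_O G$) form a neighbourhood basis of $1$ in $H$, so $H$ is the inverse limit of the finite groups $H/(H\cap N)\cong HN/N\leq G/N$, each of which satisfies $d(H/(H\cap N))\leq\rk(G/N)\leq r$. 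A compactness argument now finishes it: for each such $N$ the set of $r$-tuples in $H^r$ whose image topologically generates $H/(H\cap N)$ is clopen and non-empty, and these sets form a downward-directed family, hence have the finite intersection property; since $H^r$ is compact their intersection is non-empty, and any tuple in it topologically generates $H$. Thus $d(H)\leq r$, and taking the supremum over $H\leq_C G$ gives $\rk(G)\leq r$.

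The only non-routine ingredient is the Guralnick--Lucchini theorem used in (ii); parts (i) and (iii) amount to the Burnside basis theorem and a standard inverse-limit/compactness argument, so I anticipate no real obstacle beyond citing (ii) correctly.
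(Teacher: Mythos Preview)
Your proposal is correct and matches the paper's approach: the paper simply cites the Burnside Basis Theorem for (iii), the Guralnick--Lucchini theorem for (ii), and \cite[Proposition 3.11]{ddms} for (i), and you have supplied precisely the standard arguments behind those citations (the compactness/inverse-limit lifting for (i), and the non-generator characterisation of $\Phi(P)$ for (iii)). There is nothing to add.
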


\begin{proof}(i) See Proposition 3.11 of \cite{ddms}.

(ii) See the discussion after Proposition 8.2.4 of \cite{wilson}, and the references there to \cite{luc} and \cite{gur}.

(iii) This is the Burnside Basis Theorem, and is standard. 
\end{proof}

If $G$ is a profinite group, then $\Aut(G)$ denotes the group of all topological automorphisms of $G$. The profinite group $G$ is said to {\em virtually} have property $\mathcal{P}$ if some open normal subgroup of $G$ has property $\mathcal{P}$. By \cite[Theorem 5.3]{ddms}, if $G$ is a finitely generated profinite group then $\Aut(G)$ has the structure of a  profinite group. 

\begin{theorem} [Theorem 5.7 of \cite{ddms}] \label{aut} Let $G$ be a finitely generated profinite group. If $G$ is virtually a pro-$p$-group, then $\Aut(G)$ is also virtually a pro-$p$ group. 
\end{theorem}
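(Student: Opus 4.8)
The plan is to exhibit an open normal pro-$p$ subgroup of $\Aut(G)$ directly, by controlling how automorphisms act on the finite quotients of $G$. First some reductions. Since $G$ is finitely generated profinite, it has only finitely many open subgroups of each index (a standard fact); hence, if $N_0\triangleleft_O G$ is a pro-$p$ group, the intersection $N$ of all open normal subgroups of $G$ of index dividing $|G:N_0|$ is an open \emph{characteristic} subgroup with $N\le N_0$, so $N$ is pro-$p$ (closed in $N_0$) and finitely generated (of finite index in $G$). Put $d:=d(N)$, so $N/\Phi(N)\cong({\mathbb F}_p)^d$. For $i\ge 2$ the terms $P_i(N)$ of the lower $p$-series of $N$ are characteristic in $N$, hence in $G$, satisfy $P_i(N)\le P_2(N)=\Phi(N)$, and form a base of open neighbourhoods of $1$ in $N$, hence in $G$. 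Using \cite[Theorem 5.3]{ddms} together with the cofinality of the $P_i(N)$ among open characteristic subgroups of $G$, we have $\Aut(G)=\varprojlim_i A_i$, where $A_i$ is the image of the restriction homomorphism $\Aut(G)\to\Aut(G/P_i(N))$; moreover any closed subgroup $H\le\Aut(G)$ equals $\varprojlim_i\bigl(\text{image of }H\text{ in }A_i\bigr)$.

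Now for the candidate subgroup. Since $N$ and $\Phi(N)$ are characteristic in $G$, there is a continuous homomorphism $\Aut(G)\to\Aut(G/N)\times\Aut(N/\Phi(N))$ whose target is \emph{finite}; let $A^{(0)}$ be its kernel, an open normal subgroup of $\Aut(G)$ of index dividing $|\Aut(G/N)|\cdot|{\rm GL}_d({\mathbb F}_p)|$. It then remains only to prove that $A^{(0)}$ is pro-$p$. By the last remark of the preceding paragraph, $A^{(0)}=\varprojlim_i\bigl(\text{image of }A^{(0)}\text{ in }A_i\bigr)$, so it is enough to show that for each $i\ge 2$ the image of $A^{(0)}$ in $\Aut(G/P_i(N))$ is a $p$-group.

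Fix $i\ge 2$; write $X:=G/P_i(N)$ and $Y:=N/P_i(N)$, which is characteristic in $X$ and is a finite $p$-group, and observe that canonically $X/Y\cong G/N$ and $Y/\Phi(Y)\cong N/\Phi(N)$. The image of $A^{(0)}$ in $\Aut(X)$ lies in the subgroup $B$ consisting of all $\beta\in\Aut(X)$ that act trivially on $X/Y$ and on $Y/\Phi(Y)$, so it suffices to show that $B$ is a $p$-group. Restriction to $Y$ gives a homomorphism $B\to\Aut(Y)$ whose image consists of automorphisms of $Y$ inducing the identity on $Y/\Phi(Y)$; by the classical theorem of Burnside on stability groups of Frattini quotients this image is a $p$-group. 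Its kernel consists of those $\beta\in B$ with $\beta|_Y={\rm id}$; any such $\beta$ has the form $\beta(x)=x\,c(x)$ where $c\colon X\to Z(Y)$ factors through $X/Y$, and $\beta\mapsto c$ is an injective homomorphism into the abelian $p$-group $Z(Y)^{X/Y}$, so this kernel is also a $p$-group. Hence $B$, being an extension of a finite $p$-group by a finite $p$-group, is a $p$-group, and the plan is complete.

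The one step here that is not routine bookkeeping is the finite-level assertion that $B$ is a $p$-group; its substance is Burnside's stability theorem for the finite $p$-group $Y$ together with the description of the automorphisms of $X$ trivial on both $Y$ and $X/Y$ as a group of $Z(Y)$-valued functions on $X/Y$. The remaining care is all in the set-up: one uses that finitely generated profinite groups have finitely many open subgroups of each index, and that the identifications $X/Y\cong G/N$ and $Y/\Phi(Y)\cong N/\Phi(N)$ are compatible as $i$ varies, which is exactly what lets $A^{(0)}$ be recovered as the inverse limit of its --- now visibly pro-$p$ --- images in the $A_i$.
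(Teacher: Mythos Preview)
The paper does not supply its own proof of this statement: it is quoted verbatim as Theorem~5.7 of \cite{ddms} and used as a black box. So there is no proof in the paper to compare yours against.

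Your argument is correct and is essentially the standard proof one finds in \cite{ddms}. The key ingredients you use are exactly the expected ones: pass to an open \emph{characteristic} pro-$p$ subgroup $N$ (using that a finitely generated profinite group has only finitely many open subgroups of each index), filter by the lower $p$-series of $N$ to realise $\Aut(G)$ as an inverse limit of finite groups, and then check at each finite level that automorphisms acting trivially on both $G/N$ and $N/\Phi(N)$ form a $p$-group. The finite-level check splits precisely as you say into Burnside's theorem on the kernel of $\Aut(Y)\to\Aut(Y/\Phi(Y))$ for a finite $p$-group $Y$, together with the elementary observation that automorphisms of $X$ fixing $Y$ pointwise and acting trivially on $X/Y$ embed into $Z(Y)^{X/Y}$. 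Your verification that $\beta\mapsto c$ is a group homomorphism (with the pointwise operation on the target) is the one place where a careless reader might stumble, since $c$ itself is a crossed homomorphism rather than a homomorphism $X/Y\to Z(Y)$; but you only need the map $\beta\mapsto c$ to be a homomorphism, and it is.

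One very minor remark: the identification $\Phi(Y)=\Phi(N)/P_i(N)$ that underlies your statement $Y/\Phi(Y)\cong N/\Phi(N)$ uses $P_i(N)\le\Phi(N)=P_2(N)$ together with the fact that the Frattini subgroup of a quotient $N/K$ of a pro-$p$ group is $\Phi(N)K/K$; you assert this without comment, but it is routine.
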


We will need the following general lemma.

\begin{lemma}\label{l:NTP2}
Let $G$ be an $\emptyset$-definable  group in a structure with  NTP${}_2$ theory, and $\psi(x,\bar{y})$ a formula implying $x\in G$. Then there is  $k=k_\psi\in {\mathbb N}$ such that the following holds. 
Suppose that $H$ is a subgroup of $G$, $\pi: H\longrightarrow \Pi_{i\in J} T_i$ is an epimorphism to the Cartesian product of the groups $T_i$, and
$\pi_j: H\longrightarrow T_j$ is for each $j\in J$ the composition of $\pi$ with the canonical projection $\Pi_{i\in J} T_i\to T_j$.
Suppose also that for each $j\in J$,  there is a subgroup $\bar{R}_j\leq G$  and group  $R_j <T_j$  with
$\bar{R}_j\cap H=\pi_j^{-1}(R_j)$, such that finite intersections of the groups $\bar{R}_j$ are uniformly definable by instances of $\psi(x,\bar{y})$.
Then $|J|\leq k$.

If the underlying theory is NIP, it suffices (for the finite bound on $J$) that the $\bar{R}_j$ for $ j\in J$ are uniformly
definable.
\end{lemma}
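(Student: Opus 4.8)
The plan is to derive TP${}_2$ (resp. IP) for the formula $\psi(x,\bar y)$ if $|J|$ were large enough, where the bound $k_\psi$ comes from the TP${}_2$-(resp. IP-)pattern-size that $\psi$ cannot exhibit. First I would note that, replacing $J$ by a finite subset, it suffices to show: if $J$ is finite then $|J|\le k$ for a $k$ depending only on $\psi$. The key structural fact is that because $\pi$ is onto $\Pi_{i\in J}T_i$, the preimages $\pi_j^{-1}(R_j)$ behave like independent events: for any subset $S\subseteq J$ the intersection $\bigcap_{j\in S}\pi_j^{-1}(R_j)$ together with $\bigcap_{j\in J\setminus S}\pi_j^{-1}(T_j\setminus R_j)$ is non-empty (pull back an element of $\Pi T_i$ that lands in $R_j$ exactly for $j\in S$), and more generally any ``one coordinate per column'' choice is realised. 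Translating through $\bar R_j\cap H=\pi_j^{-1}(R_j)$, the definable sets $\bar R_j$ (and their complements, or rather their non-membership) cut out on $H$ a family with the independence/tree pattern.

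For the NIP half: pick for each $j\in J$ elements $a_j\in H$ with $\pi_j(a_j)\notin R_j$ but $\pi_i(a_j)\in R_i$ for $i\ne j$ — possible by surjectivity of $\pi$. Then for $S\subseteq J$, any element $c_S\in H$ with $\pi(c_S)$ having $j$-coordinate in $R_j$ iff $j\in S$ satisfies $c_S\in\bar R_j$ iff $j\in S$; such $c_S$ exists. So $\{\bar R_j:j\in J\}$, a family uniformly defined by $\psi$, is shattered, giving VC-dimension $\ge|J|$, hence $|J|$ is bounded by the VC-dimension of $\psi$, which is finite by NIP. For the NTP${}_2$ half one cannot use complements, only finite intersections of the $\bar R_j$, which is why the hypothesis is stated with $\psi$ defining those intersections. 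Here I would build a TP${}_2$-array: for each ``row'' index use a coordinate $j$, and for the entries in that row use cosets/translates witnessing the various values of $\pi_j$ inside $T_j$ relative to $R_j$; $k$-inconsistency along a row comes from $R_j$ being a proper subgroup (finitely many translates covering $T_j$ forces incompatibility once $k$ exceeds the relevant index — more precisely one uses that the preimages of distinct cosets of $R_j$ in $T_j$ are disjoint, so a suitable bounded number of them is inconsistent), while consistency along any transversal comes again from surjectivity of $\pi$ onto the full Cartesian product, letting us hit the prescribed coset in each column simultaneously. The uniform definability by $\psi$ of finite intersections of the $\bar R_j$ is exactly what lets these requirements be expressed by instances of a single formula, so if $|J|>k_\psi$ we contradict NTP${}_2$.

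The main obstacle I expect is the NTP${}_2$ case: arranging the array so that the $k$-inconsistency in each row is genuinely uniform and witnessed by instances of $\psi$, while only using \emph{intersections} (not complements) of the $\bar R_j$. The clean way is to fix, for each $j$, a set of more than $k$ elements $t^{(1)}_j,t^{(2)}_j,\dots$ of $T_j$ lying in pairwise distinct cosets of $R_j$, lift each to $H$, and let the $(j,m)$-entry parameter encode ``$x$ lies in $\bar R_j$ translated to meet the $m$-th coset''; distinctness of cosets gives row-inconsistency (with $k$ the needed number, depending only on $\psi$ via how many translates of a $\psi$-definable subgroup can be pairwise disjoint — a number bounded in any NTP${}_2$ theory by the absence of TP${}_2$ for $\psi$ itself), and a transversal $f$ is realised by pulling back $(t^{(f(j))}_j)_{j\in J}$ through $\pi$. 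One must be slightly careful that ``translated $\bar R_j$'' is still an instance of $\psi$; if $\psi$ only defines the $\bar R_j$ themselves, one instead works with the $\bar R_j$ directly and extracts inconsistency from the fact that the $\pi_j$ are jointly surjective onto an infinite product, forcing a genuine tree pattern — and it is precisely to avoid this subtlety that the hypothesis explicitly grants uniform definability of finite \emph{intersections}.
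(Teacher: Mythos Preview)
Your NIP argument is correct and matches the paper's: shatter the family $\{\bar R_j\}$ by pulling back elements of $\prod T_i$ with prescribed membership in each $R_j$.

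Your NTP${}_2$ argument has a real gap. You propose rows indexed by $j\in J$ and, within row $j$, columns given by cosets of $R_j$ in $T_j$. For TP${}_2$ you need each row to have arbitrarily many columns (via compactness, an $l\times l$ array for every $l$). But nothing in the hypotheses bounds $[T_j:R_j]$ away from $2$; in the paper's main application (Lemma~\ref{finrank}) each $T_j\cong\mathbb Z/p\mathbb Z$ and $R_j=1$, so every row in your scheme would have exactly $p$ columns. Your remark that ``$k$ exceeds the relevant index'' conflates the inconsistency parameter $k$ (which can and should be $2$, since distinct cosets are disjoint) with the number of columns needed (which must be unbounded). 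The shattering pattern you produced for the NIP case does not by itself give TP${}_2$: a formula can have IP without having TP${}_2$ (e.g.\ in the random graph), so merely having NTP${}_2$ does not forbid that pattern.

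The paper's device, which you are missing, is exactly what the ``finite intersections'' hypothesis is for. Assuming $|J|\ge l^2$, partition $J=\bigcup_{i\in I}J_i$ with $|I|\ge l$ and each $|J_i|\ge l$, and set $\bar S_i:=\bigcap_{j\in J_i}\bar R_j$; by hypothesis each $\bar S_i$ is a $\psi$-instance. For $j\in J$ choose $g_j\in H$ with $\pi_j(g_j)\notin R_j$ and $\pi_m(g_j)=1$ for $m\neq j$. Then the cosets $\bar S_i g_j$ for $j\in J_i$ are pairwise distinct, so row $i$ (indexed by $I$, not $J$) now has $\ge l$ columns, all $2$-inconsistent, and any transversal $f:I\to J$ with $f(i)\in J_i$ is realised by the element of $H$ projecting to $\pi_{f(i)}(g_{f(i)})$ in coordinate $f(i)$ and to $1$ elsewhere. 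The formula witnessing TP${}_2$ is $\phi(x,\bar y,z)\equiv\psi(xz^{-1},\bar y)$, determined by $\psi$. So the intersections hypothesis is not there to sidestep a coset-definability issue (cosets are handled by passing to $\phi$ in any case); it is there to manufacture rows of arbitrary width out of coordinates whose individual index $[T_j:R_j]$ may be tiny.
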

\begin{proof}
 Suppose for a contradiction that this is false. Then for any $l\in {\mathbb N}$ we may find data $J, H, \bar{R}_j$ etc. as in the statement, such that $|J|\geq l^2$. Choose a partition $J=\bigcup_{i\in I} J_i$ of $J$ into finite sets $J_i$ each of size at least $l$, with $|I|\geq l$.
 
For each $i\in I$, let $\bar{S}_i:= \bigcap_{j\in J_i}\bar{R}_j$, and put $S_i:=\bar{S}_i\cap H=\bigcap_{j\in J_i}\pi_j^{-1}(R_j)$. By our assumption, for each $i\in I$ there is $\bar{a}_i$ such that $\bar{S}_i=\psi(G,\bar{a}_i)$. For each $j\in J$ choose $g_j\in H$ such that $\pi_j(g_j)\in T_j\setminus R_j$ and $\pi_i(g_j)=1$ for $i\neq j$. Note that for any $i\in I$, the elements $g_j$ with $j\in J_i$ all lie in distinct cosets of $S_i$ in $H$. Hence the cosets $\bar{S}_ig_j$ of $\bar{S}_i$ in $G$ are distinct (for distinct  $j\in J_i$) and are uniformly definable by some formula $\phi(x,\bar{a}_ig_j)$ with $\phi$ dependent only on $\psi$.

We claim that the formula $\phi(x,\bar{y}z)$ has TP${}_2$. Clearly, for any $i$, the formulas
$\phi(x,\bar{a}_ig_j)$ (for $j\in J_i$) are 2-inconsistent, since they define different cosets of the same group. 
We shall show that if $f(i)\in J_i$ for each $i\in I$, then the set $\{\phi(x, \bar{a}_ig_{f(i)}): i\in I\}$ is consistent. This will show that the formula $\phi(x,\bar{y}z)$ has TP${}_2$, for to show there 
are $\bar{a}_{ij}$ (for $i,j\in {\mathbb N}$) satisfying conditions (i) and (ii) of the definition of TP${}_2$ in Section 1, it suffices by compactness to find arbitrarily large finite such arrays ; we get these by putting  $\bar{a}_{ij}=(\bar{a}_i,g_j)$.

So let $i_1,\ldots,i_t\in I$. Observe that if $j\not\in J_i$ then $g_j\in S_i$. 
Let $h\in H$ be the unique element which projects to $g_{i,f(i)}$ in the $f(i)$-coordinate for each $i=1,\ldots, t$, and 
projects to the identity in other coordinates. Then $h\in S_ig_{f(i)}$ for each $i=1,\ldots,t$, so $\bigwedge_{i=1}^t \phi(h,\bar{a}_ig_{f(i)})$ holds, as required.

If the theory   is NIP and the $\bar{R}_j$ are uniformly definable by $\chi(x,\bar a_j)$, just pick $g_j\in \pi_j^{-1}(T_j\setminus R_j)$ for each  $j\in J$. For a finite subset $F\subseteq J$ let $h_F$ be the unique element of $H$ such that $\pi_j(h_F)=g_j$ if $j\in F$, and $\pi_j(h_F)=1$ otherwise. Then $h_F\in \bar{R_j}$ if and only if $j\not\in F$, contradicting the NIP assumption.

\end{proof}

\begin{lemma} \label{finrank} Let $\mathcal{G}=(G,I)$ be a 2-sorted full profinite group with NTP${}_2$ theory. Then $G$ has finite rank (in the sense of profinite groups).
\end{lemma}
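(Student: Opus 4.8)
The plan is to prove the contrapositive in the form: if $G$ does not have finite rank, then $\Th(\mathcal{G})$ has TP${}_2$, using Lemma~\ref{l:NTP2} as the engine. By Proposition~\ref{genfacts}(i), $\rk(G) = \Sup\{\rk(G/N) : N \triangleleft_O G\}$, so if $G$ has infinite rank then for every $d$ there is an open normal $N$ with $\rk(G/N) > d$; equivalently, $G$ has a closed subgroup with arbitrarily large minimal generating set. Since we want to feed Lemma~\ref{l:NTP2} a configuration $\pi : H \to \prod_{i \in J} T_i$ with $|J|$ arbitrarily large, the first key step is to extract, from a closed subgroup $H$ of large rank, an epimorphism of an open subgroup of $H$ (or of a suitable finite quotient) onto a large Cartesian power of a fixed finite simple-ish group, with each kernel-type subgroup $\bar R_j$ being the intersection of $H$ with an \emph{open subgroup of $G$}.

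The cleanest route: given $\rk(G/N)$ large, pass to a finite quotient $\bar G = G/N$ with $d(H\!N/N)$ large for some closed $H \le G$, and apply Proposition~\ref{genfacts}(ii)--(iii). By (ii), if all Sylow subgroups of a finite group are $d$-generated then the group is $(d{+}1)$-generated; contrapositively, large $d(\bar H)$ forces some Sylow $p$-subgroup $P$ of $\bar H$ to have $d(P)$ large, and then by the Burnside Basis Theorem (iii), $P/\Phi(P)$ is elementary abelian of large rank $d(P)$, hence $P$ (and so $\bar H$) surjects onto $(C_p)^{d(P)}$. Pulling this back, we obtain a subgroup $H \le G$, an epimorphism $\pi : H \to (C_p)^m = \prod_{i=1}^m C_p$ with $m$ large, set $T_i = C_p$ and $R_j = 1$, so that $\bar R_j \cap H = \pi_j^{-1}(1)$ where $\bar R_j$ is the preimage in $G$ of the $j$-th coordinate hyperplane — and crucially each such $\bar R_j$, and all finite intersections of them, is an \emph{open} subgroup of $G$ (it contains $N$, with $\bar G/(\bar R_j N/N)$ finite). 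Since $\mathcal{G}$ is \emph{full}, all open subgroups of $G$ are uniformly definable by a single formula $\psi(x,\bar y)$ (this is exactly where fullness is used), so finite intersections of the $\bar R_j$ are uniformly definable by instances of $\psi$. Lemma~\ref{l:NTP2} then gives a bound $k_\psi$ on $m$ independent of the configuration, contradicting that $m$ could be taken arbitrarily large.

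The step I expect to be the main obstacle is the bookkeeping that turns "$\rk(G)$ infinite" into a genuinely \emph{uniform} family of configurations to which Lemma~\ref{l:NTP2} applies: one must ensure the $\bar R_j$ are open subgroups of $G$ itself (not merely of $H$ or of $\bar G$), so that fullness makes them $\psi$-definable, and one must check that the bound $k_\psi$ from Lemma~\ref{l:NTP2} does not secretly depend on $J$, $H$, or $\pi$. The reduction to a fixed prime $p$ and to elementary abelian quotients via Proposition~\ref{genfacts}(ii),(iii) handles the group-theoretic side; the point is simply that we only need \emph{one} prime to appear with unbounded multiplicity, which Proposition~\ref{genfacts}(ii) guarantees. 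Everything else — that $H \le G$ closed of large rank produces a large elementary abelian quotient of an appropriate subgroup, and that this yields the abstract data $(J, H, \pi, \bar R_j, R_j)$ of Lemma~\ref{l:NTP2} — is routine once the openness/uniformity is set up correctly.
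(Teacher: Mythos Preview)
Your proposal is correct and follows essentially the same route as the paper: pass to a finite quotient $G/N_k$ of large rank, use Proposition~\ref{genfacts}(ii) to locate a Sylow $p$-subgroup $Q_k$ with $d(Q_k)$ large, apply the Burnside Basis Theorem to get an elementary abelian quotient $(C_p)^l$, and then feed the resulting coordinate hyperplanes (which are open in $G$ since they contain $N_k$, hence uniformly definable by fullness via the single formula $K(x,i)$) into Lemma~\ref{l:NTP2}. One small inaccuracy: Proposition~\ref{genfacts}(ii) does \emph{not} guarantee that a single prime appears with unbounded multiplicity---the prime $p$ may vary with $k$---but this is harmless, since the bound $k_\psi$ in Lemma~\ref{l:NTP2} depends only on $\psi$ and not on the groups $T_i$.
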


\begin{proof}
Suppose for a contradiction that $G$ has infinite rank. Then by Proposition~\ref{genfacts}(i), for every $k$ there is $N_k\triangleleft_O G$ such that $\rk(G/N_k)\geq k+1$. Hence there is $H_k$ with $N_k\leq H_k\leq G$ such that $d(H_k/N_k)\geq k+1$. 
Thus, by Proposition~\ref{genfacts}(ii), there is a prime $p$ such that some Sylow $p$-subgroup $Q_k$ of $H_k/N_k$ satisfies $d(Q_k)\geq k$. 

Let $\pi_k:H_k\to H_k/N_k$ be the natural map, and put $\Phi_k:=\pi_k^{-1}(\Phi(Q_k/N_k))$. Then $\pi_k^{-1}(Q_k)/\Phi_k$ is an elementary abelian $p$-group of rank $l\geq k$. 

We now apply Lemma~\ref{l:NTP2} to $\pi_k^{-1}(Q_k)\longrightarrow \Pi_{i<l}(\mathbb{Z}/p\mathbb{Z})$  to obtain a contradiction, using  the fact that subgroups $P$ (where $N_k<P<G$ for some $k\in \mathbb{N}$) and their finite intersections are uniformly definable.


\end{proof}

{\em Proof of Theorem~\ref{mainconj}}( $4.\Rightarrow 1.$)

Now suppose that $\mathcal{G}=(G,I)$ is  a full profinite group with NTP${}_2$ theory. By Lemma~\ref{finrank}, $G$ has finite rank. By Theorem~\ref{ns}, we may now assume that the finite index subgroups of $G$ are open, and so are uniformly definable in $\mathcal{G}$. By Corollary 5.4.5 of \cite{read} (extending \cite[Theorem 8.4.1]{wilson}), $G$ has  normal subgroups
 $N\leq A\leq G$ such that $N$ is pronilpotent of finite rank, $A/N$ is finitely generated abelian, and $G/A$ is finite. Here, $N$ is the {\em pro-Fitting} subgroup of $G$, that is, the group generated by all the subnormal pro-$p$ subgroups of $G$ (over all primes); it is pronilpotent, so closed. Likewise the subgroup $A$ of $G$ has finite index in $G$ and so is open by Theorem~\ref{ns}, and hence is closed. 
Replacing $G$ by a subgroup of finite index if necessary, we may assume $G=A$. 
We aim to show that after a further reduction we have in fact $G=N$
and that this is a Cartesian product of pro-$p_j$ groups $P_j$
 for $j$ in some finite set $J$. In view of the finite rank of $G$  and Theorem~\ref{lazard} (iii)$\Rightarrow$ (i) this
 will prove ( $4.\Rightarrow 1.$).

By Proposition 2.4.3 of \cite{wilson}, the group $G/N$ is a Cartesian  product of groups $\{Q_l:l\in L\}$, where $Q_l$ is an abelian   pro-$r_l$ group and $\{r_l:l\in L\}$ are distinct primes. Let $\pi_N$ denote the map $G\to G/N$. For any proper finite index subgroup $R_l$ of $Q_l$, the group $\pi_N^{-1}(R_l)$ has finite index in $G$, so by Theorem~\ref{ns} and fullness, such groups $\pi_N^{-1}(R_l)$ and their finite intersections are uniformly definable in $\mathcal{G}$. It follows by Lemma~\ref{l:NTP2} (with $H=G$) that the set $L$ is finite.

Again using \cite[Proposition 2.4.3]{wilson} the pronilpotent group $N$ is a Cartesian product of pro-$p_j$ groups $P_j$
 for $j\in J$. We may again assume that the $P_j$ (for $j\in J$) are pairwise distinct primes.

\medskip

{\em Claim } The set $J$ is finite.

{\em Proof.} Suppose that $J$ is infinite. Define
$$J^*:=\{j\in J: \mbox{~for all~} l\in L, r_l\neq p_j\},$$
and let $M$ be the Cartesian product of the $P_j$ for $j\in J^*$. Then the supernatural numbers $|M|$ and $|G/M|$ (see \cite[Section 2.1]{wilson}) are coprime, so by the Schur-Zassenhaus Theorem for profinite groups (\cite[Proposition 2.3.3]{wilson}), there is $B\leq G$ such that $B\cap M=1$ and $G=M\rtimes B$; see Proposition 2.3.3 and the preceding pages of \cite{wilson} for background here.  Each group $P_j$ 
is finitely generated, so has finitely many proper subgroups of each finite index, and so (by considering the intersection of all subgroups of any fixed index greater than 1) has a proper characteristic finite index subgroup $R_j$. 
Let $\pi_j:  M=\Pi_{i\in J^*}P_i\longrightarrow P_j$ denote the canonical projection.
Then the group $\overline{R_j}=\pi_j^{-1}(R_j)\rtimes B$ has
finite index in $G$  and hence the collection of such groups and their finite intersections is uniformly definable in $\mathcal{G}$.
By Lemma~\ref{l:NTP2},  applied to $H=M$ and the $\overline{R_j}$, we see that $J^*$ and consequently $J$ are finite sets.

\medskip

We may now suppose that $N=P_1\times \ldots \times P_t$ where each $P_i$ is a Sylow $p_i$-subgroup of $N$, and 
$G/N =Q_1\times \ldots \times Q_t\times U$, where each $Q_i$ is a (possibly trivial) Sylow $p_i$-subgroup of $G/N$ and $U$ is a direct product of finitely many Sylow subgroups of $G/N$.
As before, let $\pi_N:G\to G/N$ be the natural map and for $B\leq G/N$ let $\overline{B}:=\pi_N^{-1}(B)$. 
By the Schur-Zassenhaus Theorem, since the supernatural numbers $|N|$ and $|U|$ are coprime, we may write $\bar{U}=N\rtimes V$ for some $V\leq G$. By Theorem~\ref{aut}, $V$ induces a finite group of automorphisms on $N$, so, replacing $G$ by a subgroup of finite index if necessary, we may suppose $V$ centralises $N$ and so $\bar{U}=N\times V$. Since $N$ is the maximal pronilpotent subgroup of $G$ this forces $V=1$, so we now assume $U=1$.

Also, for each $i=1,\ldots, t$ let $N_i:=\Pi_{j\neq i} P_j$. Then $N_i\triangleleft \overline{Q_i}$ and $|N_i|$ and $\overline{Q_i}/N_i$ are coprime, so again by the Schur-Zassenhaus Theorem we may write $\overline{Q_i}=N_i\rtimes D_i$ for some pro-$p_i$-group $D_i\leq \overline{Q_i}$. Again, by Theorem~\ref{aut}, each $D_i$ induces a finite group of automorphisms of $N_i$, so, replacing $G$ by a subgroup of finite index if necessary, we may suppose $D_i$ centralises $N_i$, that is, $\overline{Q_i}=N_i\times D_i$ for each $i$. Since each $D_i$ is a pro-$p_i$-group so pronilpotent, it follows that $\overline{Q_i}\leq N$ for each $i$, that is, $G=N=P_1\times \ldots \times P_t$. Finally, as noted above, each $P_i$ has finite rank by Lemma~\ref{finrank}, so we obtain condition (1.) of Theorem~\ref{mainconj} by applying Theorem~\ref{lazard} (iii)$\Rightarrow$ (i) to each $P_i$. $\Box$

\section{Further Observations}

We consider briefly what can be said about NIP profinite groups without the assumption that they are full. It seems difficult to prove any version of Theorem~\ref{mainconj} without the fullness assumption, but we obtain the following. 

\begin{proposition} \label{prosol}
Let $\mathcal{G}=(G,I)$ be a NIP profinite group. Then $G$  has a prosoluble open normal subgroup of finite index.
\end{proposition}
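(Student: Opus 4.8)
The plan is to exploit the classical structure theory of profinite groups together with the NIP version of Lemma~\ref{l:NTP2}, via essentially the same mechanism used in the proof of the Claim in Section~4. Recall that a profinite group is prosoluble if and only if it has no nonabelian finite simple group as a (continuous) section, equivalently all its finite continuous quotients are soluble. So the strategy is to bound, using NIP, the amount of "nonabelian simple" content that can appear in the finite quotients of $G$, and then pass to an open normal subgroup that has none.

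First I would invoke the structure of profinite groups in terms of their finite quotients: $G = \varprojlim G/N$ over $N \triangleleft_O G$. If $G$ is not virtually prosoluble, then the nonabelian composition factors of the finite groups $G/N$ cannot be "absorbed" into a fixed open normal subgroup; more precisely, for every $N \triangleleft_O G$ there is a further open normal $N' \leq N$ such that $N/N'$ has a nonabelian simple composition factor. Iterating, one produces, inside the finite quotients, chief factors that are direct products $T^{m}$ of a nonabelian finite simple group $T$, with $m$ unbounded as $N$ shrinks. The key point is that a nonabelian finite simple group $T$ has a proper subgroup $R < T$ (any maximal subgroup works), and for a chief factor of the form $\prod_{i} T_i$ one can take the preimages of the "coordinate" subgroups $\prod_{i\neq j} T_i \times R_j$; these have finite index in $G$. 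Here I do not have fullness, so I must be careful: I only know that the given uniformly definable family $\{K_i : i \in I\}$ is a basis of neighbourhoods of $1$, not that it contains all open subgroups.

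To handle the absence of fullness, I would argue as follows. Given an open normal $N \triangleleft_O G$, choose $i \in I$ with $K_i \leq N$; since $\{K_i\}$ is a basis and $K$ is a definable relation, all the groups $K_i$ (and their finitely many distinct cosets, hence all subgroups containing some $K_i$, via the quotient $G/K_i$ which is interpretable) are uniformly definable. So working modulo $K_i$ in the finite group $G/K_i$, the relevant subgroups $\bar R_j$ described above — preimages of coordinate subgroups in a chief factor — are open (they contain $K_i$) and their finite intersections are uniformly definable by instances of a single formula. Now applying the NIP clause of Lemma~\ref{l:NTP2} to the epimorphism $\pi: H \to \prod_{j} T_j$ (restricted to a suitable subgroup $H$ realizing the chief factor, with $R_j < T_j = T$), we get a uniform bound $k$ on the number of factors. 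This contradicts the unboundedness of $m$ established above. Hence $G$ has an open normal subgroup $N$ all of whose finite continuous quotients are soluble, i.e. $N$ is prosoluble.

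The main obstacle I anticipate is the bookkeeping needed to produce, uniformly and definably without fullness, a genuine homomorphism $H \to \prod_j T_j$ onto a product of nonabelian simple groups with $H$ a definable (or at least interpretable) subgroup and the $\bar R_j$ uniformly definable — the issue being that a nonabelian chief factor of a finite quotient need not split off, so $H$ should be taken as the preimage in $G/K_i$ of such a chief factor and one must check that the hypotheses of Lemma~\ref{l:NTP2} (namely $\bar R_j \cap H = \pi_j^{-1}(R_j)$ for subgroups $\bar R_j \leq G$ and $R_j < T_j$) are genuinely met in this setting. A secondary point is verifying that "$G$ not virtually prosoluble" really does force unboundedly many isomorphic nonabelian simple factors in a single chief factor rather than merely infinitely many nonabelian factors scattered across quotients; this uses that there are only finitely many nonabelian simple groups of each order, so if the total nonabelian content is unbounded while each "level" stays bounded one can still extract, by a pigeonhole over finitely many isomorphism types at each bounded order, a single product $T^m$ with $m$ large. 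Once these two points are in place, the rest is a direct application of the lemma.
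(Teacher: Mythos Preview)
Your approach is genuinely different from the paper's, and it has a real gap at exactly the point you flag as ``the main obstacle''.

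The paper does \emph{not} try to apply Lemma~\ref{l:NTP2} here. Instead it forms the uniformly definable normal cores $N_i=\bigcap_{g\in G}K_i^g$, so that the finite groups $H_i=G/N_i$ are uniformly interpretable in $\mathcal{G}$; hence any non-principal ultraproduct of the $H_i$ is interpretable in an ultrapower of $\mathcal{G}$ and is therefore NIP. It then invokes the Macpherson--Tent theorem on NIP pseudofinite groups to get a uniform bound $d$ on $|H_i:R(H_i)|$, together with Wilson's uniform first-order definition of the soluble radical in finite groups, and reads off an open normal prosoluble subgroup. All the hard structural work (ultimately resting on CFSG) is packaged in those two citations.

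Your argument, by contrast, needs the subgroups $\bar R_j$ (preimages in $G$ of coordinate subgroups of a nonabelian chief factor of some $G/K_i$) to be \emph{uniformly} definable in $\mathcal{G}$. Your justification is that ``they contain $K_i$'' and that $G/K_i$ is interpretable. But interpretability of the finite quotient $G/K_i$ only tells you that each individual subgroup of $G$ containing $K_i$ is definable with parameters; it does \emph{not} give a single formula that works as $i$ varies, because a subgroup of the finite group $G/K_i$ may require an unbounded number of coset representatives to pin down as $|G/K_i|\to\infty$. In the full case this is exactly what the index sort $I$ provides, and its absence here is the whole difficulty. Without an independent argument (which would in effect reprove the Macpherson--Tent bound, and likely needs CFSG), this step does not go through.

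A smaller issue: your ``secondary point'' reasoning is also off. In $G=\prod_{n\ge 5}A_n$ with the obvious neighbourhood basis, every chief factor of every finite quotient is a single $A_n$, so no $T^m$ with $m$ large ever appears. Lemma~\ref{l:NTP2} does allow non-isomorphic $T_j$, so you could instead take $H\twoheadrightarrow\prod_j A_{n_j}$ directly---but you are then thrown back onto the uniform definability problem for the $\bar R_j$, which is unresolved.
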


\begin{proof}Let $\{K_i:i\in I\}$ be the family of open subgroups of $G$ indexed by $I$. 
For each $i\in I$ let $N_i:= \bigcap_{g\in G}K_i^g$, which is a finite intersection as $|G:K_i|$ is finite. Hence the $N_i$ are uniformly definable open normal subgroups of $G$.  Setting $H_i:=G/N_i$, the $H_i$ are uniformly interpretable. Put $\mathcal{C}:=\{H_i:i\in I\}$. Then any non-principal ultraproduct of members of $\mathcal{C}$ is definable in an ultrapower of $\mathcal{G}$, so is NIP. It follows by \cite[Theorem 1.2]{mt} that there is $d\in {\mathbb N}$ such that for each $H\in \mathcal{C}$, the soluble radical $R(H)$ of $H$
satisfies $|H:R(H)|\leq d$. Furthermore, by \cite{wilson2}  there is a formula $\phi(x)$ such that for each $H\in \mathcal{C}$, $R(H)=\{x\in H: H\models \phi(x)\}$.

It is now easily checked that $G$ has a definable open normal subgroup $G_1$ such that for all $i\in I$, $G_1/(N_i\cap G_1)$ is soluble. Thus, $G_1$ is prosoluble. 
\end{proof}

It would be interesting to find more examples of NIP 2-sorted profinite groups which are {\em not} full. In particular, Chatzidakis has proposed:

\begin{problem} Find examples of NIP 2-sorted profinite groups $\mathcal{G}=(G,I)$, not $p$-adic analytic,  where $I$ is totally ordered.
\end{problem}

The point here is that a {\em chain} $I$ cannot witness the independence property, and also Lemma~\ref{l:NTP2} should not be applicable.
 The following observation gives natural examples of non-full NIP 2-sorted profinite groups, with $I$ totally ordered, which {\em are} $p$-adic analytic.  
\begin{lemma}
Let $G$ be a uniformly powerful pro-$p$-group, and $\mathcal{G}=(G,I)$ be full. Relabelling elements of $I$, we may suppose that $\omega\subset I$, with $K_n:=P_n(G)$ for each $n\in \omega$. Then  the 2-sorted profinite group $\mathcal{G}^*=(G,\omega)$ is definable in ${\mathbb Q}_p^{{\rm an}}$, so is NIP.
\end{lemma}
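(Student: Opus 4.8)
The plan is to leverage the work already done in Section~3 and the fact that ${\mathbb Q}_p^{\an}$ is interpretable in ${\mathbb Z}_p^{\an}$ (and vice versa, in the relevant directions). By Corollary~\ref{c:defG}, once we fix a basis $(x_1,\ldots,x_d)$ of $G$, the group $G$ is definable on the domain ${\mathbb Z}_p^d$ inside ${\mathbb Z}_p^{\an}$, with the group operations analytic, and the function $\omega$ is definable as well; by Lemma~\ref{dusexp} the ${\mathbb Z}_p$-action by exponentiation is also definable. Since ${\mathbb Z}_p^{\an}$ is interpretable in ${\mathbb Q}_p^{\an}$ (indeed ${\mathbb Z}_p$ is a definable subset of ${\mathbb Q}_p$), all of this data lives in ${\mathbb Q}_p^{\an}$. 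So $G$ itself, as a pure group, is definable in ${\mathbb Q}_p^{\an}$, and it remains only to handle the second sort, which is now the much smaller index set $\omega$ rather than the set of all open subgroups.

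The key simplification over Proposition~\ref{dusautoy} is that we no longer need all good bases: the second sort is just $\omega$, totally ordered, and the relation $K$ is required to satisfy $K_n = P_n(G)$. So the plan is: first, identify $\omega$ inside ${\mathbb Q}_p^{\an}$ with the non-negative part of the value group (or with ${\mathbb N}$ as a definable sort — the value group of ${\mathbb Q}_p^{\an}$ is a ${\mathbb Z}$-group, and its non-negative part is definable). Second, exhibit the relation $K \subseteq G \times \omega$ as a definable subset: by Theorem~\ref{dusrep}(iii) (equivalently, the definability of $\omega$ from Corollary~\ref{c:defG}), we have $x(\lambda) \in P_n(G)$ if and only if $\omega(x(\lambda)) \geq n$, i.e.\ $\Min\{\val(\lambda_i)+1 : 1\le i\le d\} \geq n$, which is visibly a definable condition relating the tuple $\lambda \in {\mathbb Z}_p^d$ to $n$ in the value group. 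The partial order $\leq$ on $\omega$ is just the order on the value group. Hence $\mathcal{G}^* = (G,\omega)$ is interpretable in ${\mathbb Q}_p^{\an}$, and since ${\mathbb Q}_p^{\an}$ is NIP (indeed dp-minimal) by Theorem~\ref{dd} and its proof, so is $\Th(\mathcal{G}^*)$.

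One point to check carefully is the first sentence of the statement: that after relabelling we may indeed take $\omega \subseteq I$ with $K_n = P_n(G)$. This is immediate because $G$ is a finitely generated pro-$p$ group, so by \cite[Proposition 1.16(iii)]{ddms} the lower $p$-series $P_n(G)$ forms a base of open neighbourhoods of $1$; since $\mathcal{G} = (G,I)$ is full, each $P_n(G)$ appears as some $K_i$, and these $K_i$ with $K_i \geq K_j \iff i \leq j$ restrict to a copy of $(\omega, \leq)$ inside $I$. The only genuinely substantive content is the definability of the relation $K$, and this is essentially a restatement of Theorem~\ref{dusrep}(iii), so there is no real obstacle — the whole argument is a specialization of Section~3 to the coarser index set $\omega$. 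The mildest subtlety is making sure that $\omega$, as a definable sort of ${\mathbb Q}_p^{\an}$, is presented compatibly with the partial order and with the membership relation in a single interpretation; this is routine since the value group and its order are definable, and ${\mathbb N}$ sits definably inside the value group as a ${\mathbb Z}$-group with a least element on the non-negative side.
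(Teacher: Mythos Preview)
Your proposal is correct and follows essentially the same approach as the paper: the core observation is that $P_n(G)=\{g\in G:\omega(g)\geq n\}$, and the definability of $\omega$ via Theorem~\ref{dusrep}(iii) then gives the relation $K$ uniformly. The paper's proof is two sentences to this effect; you have simply spelled out the surrounding details (the identification of the index sort with the non-negative value group, the relabelling justification, and the passage between ${\mathbb Z}_p^{\an}$ and ${\mathbb Q}_p^{\an}$) that the paper leaves implicit.
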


\begin{proof} We have $P_n(G):=\{g\in G:\omega(g)\leq n\}$. Thus, it suffices to observe that, by Theorem~\ref{dusrep}(iii), the function $\omega$ is definable in ${\mathbb Q}_p^{{\rm an}}$. 
\end{proof}

We note next the following corollary of the characterisation in Theorem~\ref{mainconj}.

\begin{proposition} \label{strongNIP}
Let $\mathcal{G}=(G,I)$ be a full profinite group with NTP${}_2$ theory $T$. Then
 the closed subgroups of $G$ are uniformly definable in a NIP expansion of $\mathcal{G}$, 

\end{proposition}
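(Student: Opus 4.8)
The plan is to show that, under the hypotheses, the family of all closed subgroups of $G$ is already uniformly definable in $\mathcal{G}$ itself, so that one may take the ``NIP expansion'' to be the trivial one: $\mathcal G$ is NIP by Theorem~\ref{mainconj}, since $T$ is NTP${}_2$.

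First I would record two ingredients. Since $\Th(\mathcal G)$ is NTP${}_2$, Lemma~\ref{finrank} gives that $G$ has finite rank; set $d:=\rk(G)$. By the definition of rank recalled in the paper, every closed subgroup $H\leq_C G$ has a topological generating set of size at most $d$, so, padding with copies of the identity, we may write $H=\overline{\langle g_1,\ldots,g_d\rangle}$ for suitable $g_1,\ldots,g_d\in G$; conversely $\overline{\langle g_1,\ldots,g_d\rangle}$ is a closed subgroup of $G$ for every $(g_1,\ldots,g_d)\in G^d$. Secondly, in any profinite group every closed subgroup is the intersection of the open subgroups containing it: if $g\notin H\leq_C G$, choose a normal $N\leq_O G$ with $gN\cap H=\emptyset$; then $HN\leq_O G$, $H\leq HN$ and $g\notin HN$. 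Since each $U\leq_O G$ is closed, $\overline{\langle g_1,\ldots,g_d\rangle}\leq U$ if and only if $g_1,\ldots,g_d\in U$, whence
$$\overline{\langle g_1,\ldots,g_d\rangle}=\bigcap\{\,U: U\leq_O G,\ g_1,\ldots,g_d\in U\,\}.$$

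By fullness, $\{U:U\leq_O G\}=\{K_i:i\in I\}$, so the set on the right is defined, uniformly in the parameter $\bar y=(y_1,\ldots,y_d)$, by the $L_{\prof}$-formula
$$\psi(x,\bar y)\ :=\ \forall i\in I\Big(\,\bigwedge_{k=1}^{d}K(y_k,i)\ \rightarrow\ K(x,i)\,\Big).$$
Thus each instance $\psi(G,\bar g)=\{x\in G:\mathcal G\models\psi(x,\bar g)\}$ equals the closed subgroup $\overline{\langle\bar g\rangle}$, and conversely every closed subgroup of $G$ is such an instance; hence $\psi$ uniformly defines the family of all closed subgroups of $G$ in $\mathcal{G}$. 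As $\Th(\mathcal G)$ is NIP by Theorem~\ref{mainconj}, this completes the proof.

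I do not expect a genuine obstacle here: both ingredients are routine — the first is essentially the definition of rank in the paper, and the second is a standard property of profinite groups. The only place where real work would arise is in an alternative route, in which one instead places the closed subgroups of $G$ inside the strongly NIP \emph{analytic} structure $M$ (the disjoint union of the $\mathbb{Z}_{p_i}^{\an}$ together with the finite quotient $G/N$) used in the proof of Theorem~\ref{mainconj}$(1\Rightarrow 2)$: there one would have to extend du Sautoy's good-basis description (Lemma~\ref{l:goodbasis}, which treats open subgroups) to closed subgroups of arbitrary dimension $e\leq d$, and check that the collection of such good bases, and the membership relation, remain uniformly definable in $M$. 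This more elaborate route is unnecessary for the statement as given, so I would not pursue it.
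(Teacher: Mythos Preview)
Your proof is correct, and it takes a genuinely different route from the paper's argument.

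The paper works in the analytic structure. It first treats the case where $G$ is a (uniformly) powerful pro-$p$ group of rank $d$: invoking \cite[Corollary 5.8]{klopsch}, every closed $H\leq G$ has the product form $H=\overline{\langle h_1\rangle}\cdots\overline{\langle h_d\rangle}$, and by \cite[Proposition 1.28]{ddms} each factor equals $h_i^{\mathbb{Z}_p}$; together with Lemma~\ref{dusexp} this shows that closed subgroups are uniformly definable in $\mathbb{Q}_p^{\an}$, and the general case is handled by the finite-extension argument from the end of the proof of Theorem~\ref{mainconj}($1\Rightarrow 2$). The NIP expansion is thus the one induced on $\mathcal{G}$ by the interpretation in the (disjoint union of copies of the) analytic structure.

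Your argument bypasses all of this: from finite rank and fullness alone you write every closed subgroup as the $\psi(x,\bar g)$-definable intersection $\bigcap\{K_i:\bar g\in K_i^d\}$, directly in $\mathcal{G}$. This shows more than the paper states, namely that no proper expansion is needed; since Theorem~\ref{mainconj} already gives $\Th(\mathcal G)$ NIP, the ``expansion'' is trivial. The paper's route yields, as a by-product, an explicit analytic parameterisation of closed subgroups compatible with the coordinates on ${\mathbb Z}_p^d$, which may be useful elsewhere but is not required for the proposition as stated. Your approach is shorter and self-contained within the $L_{\prof}$-structure.
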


\begin{proof}
(i) We use the characterisation of the group $G$ in Theorem~\ref{mainconj} (i). First suppose that $G$ is a powerful pro-$p$ group of rank $d$. For example by \cite[Corollary 5.8]{klopsch}, if  $H$ is a closed subgroup of $G$, then there are $h_1,\ldots,h_d\in H$ such that $H=\overline{\langle h_1\rangle}\cdot \ldots\cdot \overline{\langle h_d\rangle}$. By \cite[Proposition 1.28]{ddms}, 
each subgroup $\overline{\langle h_i\rangle}$ has the form $h_i^{{\mathbb Z}_p}$. Thus, using Proposition~\ref{dusautoy} and in particular Lemma~\ref{dusexp}, the closed subgroups of
 $G$ are uniformly definable in ${\mathbb Q}_p^{{\rm an}}$. For the general case, we now argue as in Section 4, at the end of the proof of Theorem~\ref{mainconj} ($1.\Rightarrow 2$).
\end{proof}

\begin{remark} \rm
1. The proof of Proposition~\ref{prosol} (together with the proof of Theorem 3.1 in \cite{mt}) suggests the following further observation. Let $\mathcal{G}=(G,I)$ be a NIP profinite group. As above, let $\{K_i:i\in I\}$ be the family of open subgroups of $G$ indexed by $I$, and for each $i\in I$ let $N_i:=\bigcap_{g\in G} K_i^g$, and put $H_i:=G/N_i$. Let $\mathcal{C}:=\{H_i:i\in I\}$. Let $L$ be the language of groups, and let $L_P=L \cup\{P_j:j\in I\}$ where each $P_j$ is a unary predicate. View each finite group $H_i$ as an $L_P$-structure, interpreting $P_j$ by $N_iN_j/N_i$. Let $\mathcal{U}$ be a non-principal ultrafilter on $I$, and let $H^*$ be an ultraproduct of the $L_P$-structures $H_i$ with respect to $\mathcal{U}$.

Now each $P_j$ is interpreted in $H^*$ by a finite index subgroup $P_j(H^*)$. Let $P^*:=\bigcap_{j\in I} P_j(H^*)$. Then by compactness and $\omega_1$-saturation of the $L_P$-structure $G^*$, we have $H^*/P^*\cong G$. Furthermore, we may also view each $H_i$ in a natural way as an $L_{\rm prof}$-structure, and hence $H^*$ as an $L_{\prof}$-structure $\mathcal{H}^*=(H^*,I^*)$, where $I\subset I^*$. Clearly $\mathcal{H}^*$ has NIP, being interpretable in an ultrapower of $\mathcal{G}$. Now $\mathcal{H}^*$ has an elementary extension
$(\tilde{H},J)$ containing an element $j\in J$ such that $I=\{i\in J: i<j)$. It follows that $I$ is definable in $(\tilde{H},J)$, so $P^*$ is an externally definable set in $\mathcal{H}^*$ (see \cite[Definition 3.8]{simon}). By a theorem of Shelah (\cite{shelah}, see also \cite[Corollary 3.24]{simon}), the expansion of any NIP structure by the collection of all externally definable sets is NIP. Thus, the expansion $(\mathcal{H}^*,P^*)$ of $\mathcal{H}^*$, in which $P^*$ is named by a unary predicate, itself has NIP theory (though it will not be pseudofinite, due to the definability of $I^*$).

2. If $G$ is a group definable in a model $M$ of a theory $T$, then $G^{\circ}$ is defined to be the intersection of the finite index definable subgroups of $G$.

By an easy consequence of the Baldwin-Saxl Theorem (see \cite[Section 8.1.2]{simon}), if $T$ is NIP then  `$G^{\circ}$ exists', that is, $G^{\circ}$ is type-definable over $\emptyset$ and $|G:G^{\circ}|\leq 2^{|T|}$. Assuming that $G$ is sufficiently saturated, the quotient $G/G^{\circ}$ does not depend on the particular model  $G$, and so is an invariant of $T$, and naturally carries the structure of a profinite group. It can be checked that if $\mathcal{G}=(G,I)$ is a 2-sorted NIP full profinite group with theory $T$, then this invariant quotient
is isomorphic to  $G$ itself; this holds for example because $G$ has finitely many subgroups of index $n$ for each $n$, and they are all definable. 

Recall (see \cite{shelah2}) that if $G^*$ is a sufficiently saturated NIP group then $G$ has a unique smallest type-definable subgroup $(G^*)^{oo}$ of bounded index (that is, $|G:(G^*)^{oo}|<\kappa$, where the underlying structure is $\kappa$-saturated); also  $(G^*)^{oo}$ is type-definable over $\emptyset$ and   the quotient $G^*/(G^*)^{oo}$ carries the `logic topology', whose closed sets are the preimages of type-definable subsets of $G^*$. This is a compact topology on $G^*/(G^*)^{oo}$  (see e.g. \cite[Lemma 8.9]{simon}), and the isomorphism type of $G^{*}/(G^*)^{oo}$ as a topological group is independent of the choice of (sufficiently saturated) $G^*$.  It is shown in \cite{ons} -- see Corollary 2.4 and the remarks before Corollary 2.3 -- that if $G$ is a compact $p$-adic analytic group then $(G^*)^{oo}=(G^*)^o$ (this holds where $G$ is a definable object in the structure ${\mathbb Z}_p^{\an}$, and hence where it is a definable object in the full 2-sorted structure $\mathcal{G}$, since the finite index subgroups are the same). It follows easily  from Theorem~\ref{mainconj} that if $\mathcal{G}=(G,I)$ is {\em any} full profinite NIP group, and $G$ is viewed as a definable group in $\mathcal{G}$, then $(G^*)^{oo}=(G^*)^o$, and the corresponding quotient $G^*/(G^*)^{oo}$ is isomorphic as a topological group to $G$.

3. If $H^*$ is the pseudofinite group arising from a NIP profinite group $\mathcal{G}=(G,I)$ as described in (1), then $(H^*)^{\circ} =P$, so $G\cong H^*/(H^*)^{o}$ is the corresponding invariant for ${\rm Th}(H^*)$ in the language $L_P$.
Working  now in the language $L_{\prof}$, if $\mathcal{G}$ is {\em full}, then $H$ is a quotient of $G^*$ by some definable normal subgroup $K_i$ where $i$ lies in the ultrapower $I^*$ of $I$. 
It is clear from (2) above that $K_i<(G^*)^{oo}$, and hence that $(H^*)^{oo}=(H^*)^o=P$, with $(H^*)^{oo}/H^*\cong G$ (as topological groups, with the connected components in the language $L_{\prof}$). The last assertions also hold in the language $L_P$. 

4. Proposition 2.8 of \cite{ons}, combined with Theorem~\ref{mainconj},  easily yields that if $\mathcal{G}^*=(G^*,I^*)$ is a sufficiently saturated elementary extension of the full profinite NTP${}_2$ group $\mathcal{G}=(G,I)$, then $G^*$ is compactly dominated by $G^*/(G^*)^{oo}$; that is, if $\pi:G^*\to G^*/(G^*)^{oo}$ is the natural map, and $\mathcal{J}$ denotes the ideal of Haar measure zero sets in 
$G^*/(G^*)^{oo}$, then for every definable $X\subset G^*$, 
$$\{x\in G^*/(G^*)^{oo}: \pi^{-1}(x)\cap X\neq \emptyset \wedge \pi^{-1}(x)\cap (G^*\setminus X)\neq \emptyset\} \in \mathcal{J}.$$

\end{remark}

We propose the following speculative conjecture as a version of a model-theoretic one-based/field-like dichotomy conjecture for compact $p$-adic analytic groups. As above, we below view a finite group $G/P_n(G)$ as an $L_{{\rm prof}}$-structure, interpreting $K_i$ for $i\in I$ by the group $K_iP_n(G)/P_n(G)$.

\begin{conjecture}
Let $\mathcal{G}=(G,I)$ be a uniformly powerful pro-$p$ group, full as a profinite group. Then the following are equivalent.

(i) The ring ${\mathbb Z}_p$ is not interpretable in $\mathcal{G}$.

(ii) For every sentence $\sigma$ in the language $L_{{\rm prof}}$, there is $N\in \omega$ such that either each quotient $(G/P_n(G),I)$ satisfies $\sigma$ for $n>N$, or each such quotient satisfies $\neg\sigma$.
 
(iii) The group $G$ is nilpotent-by-finite.
\end{conjecture}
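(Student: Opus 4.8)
The natural framework is Lazard's correspondence (Theorem~\ref{lazard}, via \cite{lazard}): attach to the uniformly powerful pro-$p$ group $G$ its $\mathbb{Z}_p$-Lie algebra $\mathfrak{g}=L(G)$, a free $\mathbb{Z}_p$-module of rank $d=d(G)$ whose bracket is given by finitely many structure constants, and recall that for $G$ uniformly powerful $G$ is nilpotent-by-finite exactly when $\mathfrak{g}$ is nilpotent. By Corollary~\ref{c:defG}, Lemma~\ref{dusexp} and Theorem~\ref{dusrep}, the $L_{\prof}$-structure $\mathcal{G}=(G,I)$ is definable on $\mathbb{Z}_p^d$ inside the reduct of $\mathbb{Z}_p^{\an}$ given by the additive group $(\mathbb{Z}_p,+)$, the chain of predicates $p^n\mathbb{Z}_p$ (equivalently $\omega$, equivalently the valuation), and the bracket $[\cdot,\cdot]\colon\mathbb{Z}_p^d\times\mathbb{Z}_p^d\to\mathbb{Z}_p^d$. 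I would establish $(\mathrm{i})\Leftrightarrow(\mathrm{iii})$ and $(\mathrm{i})\Leftrightarrow(\mathrm{ii})$ within this picture.

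For $\lnot(\mathrm{iii})\Rightarrow\lnot(\mathrm{i})$: if $\mathfrak g$ is not nilpotent then some $\operatorname{ad}(x)$ is a non-nilpotent operator, so after clearing denominators and passing to an open subalgebra --- hence to an open subgroup of $G$, which is harmless --- $\mathfrak g$ contains a two-dimensional non-abelian $\mathbb{Z}_p$-subalgebra, which integrates to a closed subgroup of $G$ isomorphic to an open subgroup of the $p$-adic ``$ax+b$'' group $\mathbb{Z}_p\rtimes(1+p\mathbb{Z}_p)$. Inside this subgroup the conjugation action of $1+p\mathbb{Z}_p$ on $(\mathbb{Z}_p,+)$ is definable, and combining it with the definable maps $t\mapsto t-1$ and the inverse of multiplication-by-$p$ on $p\mathbb{Z}_p$ one recovers the multiplication of $\mathbb{Z}_p$; hence $\mathbb{Z}_p$ (as a ring) is interpretable in $\mathcal{G}$. (Failing the rationality needed for the $ax+b$ picture one instead reads $\mathcal{O}_K$, for a finite extension $K/\mathbb{Q}_p$, off the rational canonical form of $\operatorname{ad}(x)$, and then $\mathbb{Z}_p$ off $\mathcal{O}_K$.) The same subgroup is the input for $\lnot(\mathrm{iii})\Rightarrow\lnot(\mathrm{ii})$: one must exhibit an $L_{\prof}$-sentence whose truth in $(G/P_n(G),I)$ oscillates with $n$, which I expect to need a careful analysis of the finite $ax+b$-quotients.

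The crux is $(\mathrm{iii})\Rightarrow(\mathrm{i})$, i.e.\ that for nilpotent $\mathfrak g$ the ring $\mathbb{Z}_p$ is not interpretable in $\mathcal{G}$ --- equivalently, not interpretable in the ``valued $\mathbb{Z}_p$-module with nilpotent Lie bracket'' above. In the abelian case (zero bracket) this is the assertion that $\mathbb{Z}_p$ is not interpretable in the pure valued $\mathbb{Z}_p$-module $(\mathbb{Z}_p,+,(p^n\mathbb{Z}_p)_n)$, which I would prove by showing this theory is too tame to interpret it: either that it is distal (distality passing to interpreted structures, while $\mathbb{Q}_p$, hence $\mathbb{Z}_p$ as a ring, is not distal), or via a relative quantifier elimination witnessing that every definable set is ``semilinear''. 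The genuine obstacle, however, already appears at nilpotency class $2$: the bracket is then a nonzero alternating bilinear form, and on a free $\mathbb{Z}_p$-module any such form contains a symplectic plane $\mathbb{Z}_p^2$; from the standard symplectic form on $\mathbb{Z}_p^2$ together with the $\mathbb{Z}_p$-module structure and the definable ``division by $p$'' one can recover multiplication on $\mathbb{Z}_p$, so $\mathbb{Z}_p$ \emph{is} interpretable. Hence $(\mathrm{iii})\Rightarrow(\mathrm{i})$ fails as stated for class $\ge 2$ (e.g.\ for an open uniformly powerful subgroup of the Heisenberg group over $\mathbb{Z}_p$), and the hypothesis in $(\mathrm{iii})$ should presumably be strengthened to ``$G$ is abelian-by-finite''; the content of the corrected implication is then exactly the non-interpretability of the ring $\mathbb{Z}_p$ in the pure valued $\mathbb{Z}_p$-module.

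Finally, closing the loop through $(\mathrm{ii})$: for the corrected $(\mathrm{iii})\Rightarrow(\mathrm{ii})$ the finite quotients $(G/P_n(G),I)$ are uniformly interpretable in the finite valued modules $((\mathbb{Z}/p^m\mathbb{Z})^d,+,\dots)$, whose asymptotic theory is complete because the parameter $m$ is not first-order visible once the index sort carries no constants --- so the limit theory is just $\Th(\mathcal{G})$. For $(\mathrm{ii})\Rightarrow(\mathrm{i})$ one uses that $(\mathrm{ii})$ puts $\mathcal{G}$ inside an elementary substructure of the (then unique) pseudofinite limit of its finite quotients, together with the fact --- itself requiring proof, the point being that the relevant ``infinitesimal-valuation'' ideal is type-definable but not definable --- that $\mathbb{Z}_p$ as a ring is not interpretable in any pseudofinite structure. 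So the plan rests on three non-interpretability statements (ring in a valued module, ring in pseudofinite structures, and the oscillating sentence for $\lnot(\mathrm{ii})$), and I expect the class-$2$ phenomenon around $(\mathrm{iii})\Rightarrow(\mathrm{i})$ --- which forces a correction to the statement --- to be the main difficulty.
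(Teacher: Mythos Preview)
The statement is labelled a \emph{Conjecture} in the paper and is explicitly described there as ``speculative''; the paper offers no proof, only the heuristic remark that if $G$ is not nilpotent-by-finite one might expect $[P_i(G),P_j(G)]=P_{i+j}(G)$, which would give the index sort Presburger-like structure and so violate~(ii). There is therefore no argument in the paper against which to compare your proposal.

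Your proposal is not a proof either, but it contains one substantive and, as far as I can see, correct observation that the paper does not make: the implication $(\mathrm{iii})\Rightarrow(\mathrm{i})$ fails as stated. Take $G$ to be a uniformly powerful open subgroup of the Heisenberg group $H_3(\mathbb{Z}_p)$ for odd~$p$, for instance $\{(a,b,c):a,b,c\in p\mathbb{Z}_p\}$, which one checks directly is uniformly powerful. Then $G$ is nilpotent of class~$2$, yet the ring $\mathbb{Z}_p$ is interpretable already in the pure group $G$, hence in $\mathcal{G}$: fix $g_1,g_2$ whose images generate $G/Z(G)$; for $z,z'\in[G,G]$ choose $x\in C_G(g_1)$ with $[x,g_2]=z$ and $y\in C_G(g_2)$ with $[g_1,y]=z'$, and put $z\ast z':=[x,y]$. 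Central perturbations of $x,y$ leave commutators unchanged, so this is well-defined, and it recovers ring multiplication on $[G,G]\cong\mathbb{Z}_p$. Note that this uses only the group commutator; you do not actually need the $\mathbb{Z}_p$-module structure or ``division by~$p$'', so the counterexample is cleaner than you present it. Your suggested repair, strengthening (iii) to ``virtually abelian'' (equivalently $G\cong\mathbb{Z}_p^d$, since uniformly powerful groups are torsion-free), is the natural one.

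Two cautions about the remainder of your sketch. First, your distality argument has the facts backwards: $\mathbb{Q}_p$, and hence the ring $\mathbb{Z}_p$, \emph{is} distal, so distality cannot obstruct interpreting it; a stability or direct semilinearity argument is the right tool for showing the ring is not interpretable in a valued abelian group. Second, your reduction of $\mathcal{G}$ to ``valued $\mathbb{Z}_p$-module with nilpotent bracket'' needs care: the index sort parametrises \emph{all} open subgroups, not just the $P_n(G)$, and it is not obvious that this full family is recoverable from the Lie-algebra data without the analytic language used in Section~3.
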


One intuition here is that if $G$ is not nilpotent-by-finite, then we might hope to have $[P_i(G),P_j(G)] =P_{i+j}(G)$, or at least $[P_i(G),P_i(G)]=P_{2i}(G)$,
in which case the index set $I$ inherits some algebraic structure analogous to Presburger arithmetic, suggesting that (ii) is false.

Finally, we ask whether there is any analogue of Theorem~\ref{mainconj} connecting NIP pro-algebraic groups to groups definable in rings such as ${\mathbb C}[[T]]$ over an analytic language.


\begin{thebibliography}{999}
\bibitem{vc} M. Aschenbrenner, A. Dolich, D. Haskell, H.D. Macpherson, S. Starchenko, `Vapnik-Chervonenkis density in some theories without the independence property, I', Trans. Amer. Math. Soc. 368 (2016), 5889--5949. 
\bibitem{brown} K.S. Brown, {\em Cohoology of groups}, Springer, Berlin, 1982.
\bibitem{cks} A. Chernikov, I. Kaplan, P. Simon, `Groups and fields with NTP${}_2$', Proc. Amer. Math. Soc. 143 (2015), 395--406.
\bibitem{dd} J. Denef, L. van den Dries, `$p$-adic and real subanalytic sets', Ann. Math. 128 (1988), 79--138. 
\bibitem{ddms} J.D. Dixon, M.P.F. du Sautoy, A. Mann, D. Segal, {\em Analytic pro-$p$ groups}, 2nd Ed., Cambridge Studies in Advanced Mathematics vol. 61, Cambridge University Press, Cambridge, 1999. 
\bibitem{dus} M.P.F. du Sautoy, `Finitely generated groups, $p$-adic analytic groups, and Poincar\'e series', Ann. Math. 137 (1993), 639--670.
\bibitem{dhm} L. van den Dries, D. Haskell, H.D. Macpherson, `One-dimensional $p$-adic analytic subsets', J. London Math. Soc. (2) 59 (1999), 1--20.

\bibitem{gur} R. Guralnick, `On the number of generators of a finite group', Arch. Math. 53 (1989), 521--523.
\bibitem{hm} D. Haskell, H.D. Macpherson, `A version of o-minimality for the $p$-adics', J. Symb. Logic 62 (1997), 1075--1092.
\bibitem{hempel} N. Hempel, `Groups with NTP${}_2$', arXiv:1510.01049. 
\bibitem{kou} I. Kaplan, A. Onshuus, A. Usvyatsov, `Additivity of the dp-rank', Trans. Amer. Math. Soc. 365 (2013), 5783--5804.
\bibitem{klopsch} B. Klopsch, `An introduction to compact $p$-adic Lie groups', in {\em Lectures on profinite topics in group theory} (Eds. B. Klosch, N. Nikolov, C. Voll), London Math Soc. Student Texts, 77, Cambridge University Press, Cambridge, 2011, pp. 7-61.
\bibitem{lazard} M. Lazard, `Groupes analytiques $p$-adiques', Inst. Hautes Etudes Sci. Publ. Math. 26 (1965), 389--603.
\bibitem{luc} A. Lucchini, `A bound on the number of generators in a finite group', Arch. Math 42 (1989), 313--317.

\bibitem{mt} H.D. Macpherson, K. Tent, `Pseudofinite groups with NIP theory and definability in finite simple groups', in {\em Groups and model theory}, Contemp. Math. 576, Amer. Math. Soc., Providence, RI, 2012, 255-267.
\bibitem{NS1} N. Nikolov, D. Segal, `On finitely generated profinite groups, I: strong completeness and uniform bounds', Ann. Math. 165 (2007), 171--238.
\bibitem{NS2} N. Nikolov, D. Segal, `On finitely generated profinite groups, II: products in quasisimple groups', Ann. Math. 165 (2007), 239--273.

\bibitem{ons} A. Onshuus, A. Pillay, `Definable groups and compact $p$-adic Lie groups', J. London Math. Soc. 78 (2008), 233--247.
\bibitem{read} C.D. Reid, {\em Finiteness properties of profinite groups}, PhD thesis, Queen Mary, University of London, 2010.
\bibitem{serre} J-P. Serre, {\em Galois cohomology}, Springer, Berlin, 1997.
\bibitem{shalev1} A. Shalev, `Characterisation of $p$-adic analytic groups in terms of wreath products', J. Algebra 145 (1992), 204--208.
\bibitem{shelah2} S. Shelah, `Minimal bounded index subgroup for dependent theories', Proc. Amer. Math. Soc. 136 (2008), 1087--1091.
\bibitem{shelah} S. Shelah, `Dependent first order theories, continued', Isr. J. Math. 173 (2009), 1--60. 
\bibitem{simon} P. Simon, {\em A guide to NIP theories}, Lecture Notes in Logic vol. 44, Cambridge University Press, Cambridge, 2015.
\bibitem{wilson} J.S. Wilson, {\em Profinite groups}, Oxford University Press, Oxford, 1998.

\bibitem{wilson2} J.S. Wilson, \textit{First-order characterization of the radical of a finite group}, J. Symb. Logic \textbf{74} (2009),  1429--1435.
\end{thebibliography}
\end{document}